\documentclass[10pt]{amsart}

\usepackage{latexsym}
\usepackage{amssymb}
\usepackage{amsmath}
\usepackage{xcolor}
\usepackage{tikz-cd}
\usepackage{mathrsfs}

\usepackage[italian, english]{babel}
\usepackage{url}

\newtheorem{theorem}{Theorem}[section]
\newtheorem{lemma}[theorem]{Lemma}
\newtheorem{proposition}[theorem]{Proposition}
\newtheorem{corollary}[theorem]{Corollary}

\theoremstyle{definition}
\newtheorem{definition}[theorem]{Definition}

\theoremstyle{remark}
\newtheorem{remark}[theorem]{Remark}

\def\N{\mathbb{N}}

\def\R{\mathbb{R}}

\def\s{{\mathfrak{s}}}

\def\w{{\mathfrak{w}}}

\def\hN{{{}^*\N}}

\def\hA{{{}^*\!A}}
\def\hB{{{}^*\!B}}

\def\hA{{{}^*A}}
\def\hB{{{}^*B}}
\def\hX{{{}^*X}}

\def\hX{{{}^*X}}

\def\F{\mathcal{F}}

\def\U{\mathcal{U}}
\def\V{\mathcal{V}}
\def\W{\mathcal{W}}
\def\X{\mathcal{X}}

\def\UU{\mathfrak{U}}

\def\HJ{{\mathbb{HJ}}}

\def\ueq{{\,{\sim}_{{}_{\!\!\!\!\! u}}\;}}
\def\nueq{{\,{\not\sim}_{{}_{\!\!\!\! u}}\,}}

\makeatletter
\newcommand{\ostar}{\mathbin{\mathpalette\make@circled\star}}
\newcommand{\make@circled}[2]{%
  \ooalign{$\m@th#1\smallbigcirc{#1}$\cr\hidewidth$\m@th#1#2$\hidewidth\cr}%
}
\newcommand{\smallbigcirc}[1]{%
  \vcenter{\hbox{\scalebox{0.77778}{$\m@th#1\bigcirc$}}}%
}
\makeatother

\begin{document}

\title[An extension of Hales-Jewett Theorem]
{A simultaneous extension of Ramsey, Hindman, and
Hales-Jewett Theorems}

\author{Mauro Di Nasso}
\author{Renling Jin}

\subjclass[2000]
{Primary 05D10; Secondary 03E05, 54D80.}

\keywords{Ramsey Theory, Nonstandard analysis, Ultrafilter.}

\begin{abstract}
We prove a multidimensional extension of a strong
Hales-Jewett theorem that
simultaneously and ``directly" extends Ramsey's theorem and Hindman's theorem.
The proofs show the effectiveness and simplicity of the techniques
based on iterated nonstandard extensions that have been recently developed.
Unlike existing ultrafilter proofs, our arguments to prove the strong
Hales-Jewett theorem assume neither minimal
nor idempotent ultrafilters. To demonstrate this, we translate our proof of the
strong Hales-Jewett theorem into an ultrafilter proof that requires
only non-principal ultrafilters.
\end{abstract}

\maketitle

\section*{Introduction}

Van der Waerden's famous theorem \cite{vdw} of 1927 states that in
any finite coloring (partition) $\N=C_1\cup\ldots\cup C_r$ of the natural numbers,
one can find arbitrarily long arithmetic progressions that are ``monochromatic,"
that is, included in a color $C_j$.
Hales-Jewett theorem \cite{hj}
of 1963 is a strong Ramsey property of the
set of words $W_A$ over a finite alphabet $A$, from which
Van der Waerden's theorem can be directly derived as a particular case.
Another now classic result of arithmetic Ramsey theory is Hindman's theorem \cite{hi} of 1974,
which states that for every finite coloring of $\N$ there exists an increasing sequence
$(x_n)$ such that all finite sums of distinct elements $x_{n_1}+\ldots+x_{n_k}$ are monochromatic.

There has been intense research aimed at simultaneously
extending two or three of the Ramsey, Hindman, and Van der Waerden's theorems.
Among the most relevant results in this research are Milliken-Taylor theorem \cite{milliken,taylor}
and its polynomial generalizations \cite{bhw} that
simultaneously extend Ramsey and Hindman's theorem;
some of the Bergelson-Hindman results in \cite{bh}
that simultaneously extend Ramsey and Van der Waerden's theorems
(see also Samet-Tsaban paper \cite{st});
and Beiglb\"{o}ck's theorem \cite{beiglbock},
that simultaneously extends Ramsey's theorem and Central Set theorem.
Although Central Set theorem implies both Hindman's theorem and Van der Waerden's theorem,
these implications are not ``simultaneous''
(in the style that Milliken-Taylor theorem simultaneously implies
Ramsey and Hindman's theorems) in \cite{bh,beiglbock}.

As a further step, it is natural to search for similar simultaneous extensions
where Van der Waerden's theorem is replaced by Hales-Jewett theorem or even
by the strong Hales-Jewett theorem (Theorem \ref{HJstrong}).
About this, Bergelson and Hindman \cite{bh2} proved that a non-commutative
version of the Central Set theorem
implies both Hales-Jewett and Hindman's theorem although these implications
are again not simultaneous (see also \cite[Theorem 4.10 and Theorem 5.1]{hindman}).
We have not been able to find in the literature any result that simultaneously
extends all three theorems, with the sole exception of the Carlson-Simpson
 theorems \cite{cs,carlson}.
We emphasize, however, that this latter generalizations are not ``direct" at all,
and in fact all the works \cite{bh,bh2,bbh,beiglbock,bhw,st} mentioned above
appeared subsequently with the aim of more direct extensions of the
fundamental theorems considered.
(Here we say that Theorem A is a ``direct" extension
of Theorem B to mean that the latter is literally a special case of the formulation of the former.)

In this paper, using iterated hyper-extensions, we prove
the following ``direct" simultaneous multidimensional extension
of Ramsey, Hindman, and the strong Hales-Jewett theorems.\footnote
{~This is a (slightly) simplified version of Theorem \ref{RHHJ}.
See Remarks \ref{maintheoremimpliesHJ} and \ref{maintheoremimpliesMT}
to see how this theorem has Hales-Jewett, Ramsey, and Hindman's theorems
as particular cases.}
\begin{itemize}
\item
\emph{Let $W=W_A$ be the set of words over an alphabet $A$,
let $B\subset W$ be a finite set, and let $k\in\N$.
For every finite coloring of the $k$-tuples of words $[W]^k$ there exists a sequence of variable words
$(w_n(v))_{n\in\N}$ of increasing lengths such that
for every sequence $(b_n)_{n\in\N}$ in $B$
and for all nonempty finite subsets $F_1<F_2<\cdots<F_k$ of $\N$,
the following $k$-tuples are monochromatic:}\footnote
{~If $F=\{n_1<\ldots<n_s\}$, we denote $\prod_{i\in F}w_i(b_i)$ the concatenation of
the finite list of words $w_{n_1}(b_{n_1}),\ldots,w_{n_s}(b_{n_s})$.}
$$\left\{\prod_{i\in F_1}w_i(b_i),
\prod_{i\in F_2}w_i(b_i),\,\ldots\,,
\prod_{i\in F_k}w_i(b_i)\right\}.$$
\end{itemize}

The use of iterated hyper-extensions is a recent nonstandard analysis technique
that has revealed useful for proving new results in Ramsey theory
(see \cite{dj} and \cite[Chapter 4]{dgl}, and the references therein).
In essence, as it will be made precise later, iterated hyper-extensions
correspond to the iterated tensor products of ultrafilters,
with the advantage of a simpler formalism that facilitate combinatorial
constructions that would otherwise be very difficult to follow.

\smallskip
The article is organized as follows. In the first section, we recall the fundamental principles
of nonstandard analysis, present the basic features of iterated hyper-extensions,
and their connection with tensor products of ultrafilters and partition regularity properties.
In the second section, as a warm-up case,
we use the iterated hyper-extension technique to prove
Van der Waerden's Theorem. In the third section, these ideas
are generalized to prove the strong Hales-Jewett theorem
where one considers substitutions in a variable word of a finite list of words
(not just letters of the alphabet).
We then translate our arguments into
the language of ultrafilters, producing a new ultrafilter proof of that result which,
contrarily to the existing ones, requires neither minimal nor idempotent ultrafilters.

Note that an ultrafilter proof of Central Set
theorem requires minimal idempotent ultrafilters.
Therefore, the existing ultrafilter proof of the strong Hales-Jewett theorem
(see \cite{bbh} and the last paragraph of \S4 in \cite{bl})
requires minimal idempotent ultrafilters.

Finally, the last section contains our main theorem, obtained
applying algebraic properties of spaces of ultrafilters in the nonstandard setting.

\section{Nonstandard analysis and iterated hyper-extensions}

The foundations of the iterated hyper-extensions technique
have been throughly expounded in \cite{dj}, to which we refer the reader for
full details.
Here, we will limit ourselves to stating and proving only those properties
that will be used later in this paper.

We assume the reader is somewhat familiar with nonstandard analysis;
however, to make this article as accessible as possible, we provide
a brief summary of this theory below. The reader
can fill all details by consulting, for example, \cite[Chapter 2]{dgl}, which
provides a (hopefully) friendly introduction.

\subsection{The principles of nonstandard analysis}

The three basic principles of nonstandard analysis are the following:
\begin{itemize}
\item
\textbf{1. Star map}. The \emph{star map} $*:\mathbb{V}\to\mathbb{V}$ is a map defined
from a \emph{universe} $\mathbb{V}$ into itself.
A universe is a sufficiently large collection of sets that contains all the mathematical objects
under study and is closed under all basic set-theoretic constructions, namely
unions, intersections, ordered pairs, Cartesian products, power-sets, sets of functions, \emph{etc.}
It is assumed that ${}^*n=n$ for every $n\in\N$.
The image of an object $X\in\mathbb{V}$ under the star map is denoted by $\hX$ and called
the \emph{hyper-extension} of $X$.
\end{itemize}

Note that since the range is included in the domain, the star map can be iterated,
and so one can consider double hyper-extensions ${}^{**}X={}^*({}^*X)$, triple hyper-extensions
${}^{***}X$, and so forth.

\begin{itemize}
\item
\textbf{2. Transfer principle}.
For every ``elementary" property $P(x_1,\ldots,x_n)$ and for all objects $a_1,\ldots,a_n\in\mathbb{V}$,
one has the equivalence:
$$P(a_1,\ldots,a_n)\ \Longleftrightarrow\ P({}^*a_1,\ldots,{}^*a_n).$$
By ``elementary" we mean any property $P(x_1,\ldots,x_n)$ that refers to the objects $x_1,\ldots,x_n$
and their elements, but where the quantifiers
``for all" and ``there exists" are not allowed
over subsets of the $x_i$ or functions on the $x_i$.
\end{itemize}

The \emph{transfer} principle says that the hyper-extension $\hX$
is ``almost isomorphic" to the original object $X$, in the precise sense
that they satisfy exactly the same elementary properties.
Sometimes, we will refer to the reverse implication
``$P({}^*a_1,\ldots,{}^*a_n)\Longrightarrow P(a_1,\ldots,a_n)$"
as backward \emph{transfer}.

We remark that the iterations of the star map also satisfy the \emph{transfer} principle;
for example, for every elementary property $P(x_1,\ldots,x_n)$ and for all $a_1,\ldots,a_k\in\mathbb{V}$,
one has the equivalences $P(a_1,\ldots,a_n)\Longleftrightarrow P({}^{**}a_1,\ldots,{}^{**}a_k)$,
$P(a_1,\ldots,a_n)\Longleftrightarrow P({}^{***}a_1,\ldots,{}^{***}a_k)$, and so forth.

\begin{itemize}
\item
\textbf{3. $\kappa$-Enlargement property} ($\kappa$ a given cardinal.)
If $\F$ is a nonempty family of sets with the finite intersection property which has cardinality at most $\kappa$,
then $\bigcap_{X\in\F}\hX$ is nonempty.
To our purposes, the cardinality $\kappa=\mathfrak{c}$ of the continuum will be enough.\footnote
{~A nonempty family $\F$ has the finite intersection property if $F_1\cap\ldots\cap F_k\ne\emptyset$
for all choices of finitely many $F_1,\ldots,F_k\in\F$.}
\end{itemize}

We could think of the \emph{enlargement} property as a form of ``saturation" that guarantees the existence of
nonstandard objects, whenever they are ``potentially possible."
For instance, one obtains the existence of positive infinitesimal numbers
by considering the nonempty intersection $\bigcap_{r>0}{}^*(0,r)$ of the hyper-extensions
of intervals of real numbers $(0,r)\subset\R$.

Typically, in nonstandard analysis one works with the hyperreal numbers ${}^*\mathbb{R}$.
Since in this paper we will be dealing with the discrete setting of arithmetic Ramsey theory,
we will focus on the set of natural numbers $\N$ and its iterated hyper-extensions $\hN, {}^{**}\N, {}^{***}\N, \ldots$.
By \emph{transfer}, if $x$ belongs to a set $X$, then its hyper-extension ${}^*x\in\hX$;
since ${}^*n=n$ for all $n\in\N$, we obtain that $\N\subseteq\hN$.
By \emph{enlargement}, it is shown that the inclusion $\N\subsetneq\hN$ is strict,\footnote
{~The family of intervals $\F=\{[n,+\infty)\mid n\in\N\}$ has the finite intersection
property, and so there exist elements $\nu\in\bigcap_{n\in\N}{}^*[n,+\infty)$.
Clearly such $\nu\in\hN\setminus\N$.}
and hence $\hN\subsetneq{}^{**}\N$,
${}^{**}\N\subsetneq{}^{***}\N$, and so forth.

Again by \emph{transfer}, we observe that the \emph{hypernatural numbers} $\hN$ are the positive
part of the discretely ordered ring ${}^*\mathbb{Z}$ of hyperintegers.
Furthermore, $\N$ is an initial segment of $\hN$,
meaning that every element $\nu\in\hN\setminus\N$ is greater than all elements in $\N$.
Consequently, we will refer to the numbers in $\N$ as ``finite" and the numbers
in $\hN\setminus\N$ as ``infinite."

Note that, since $\N$ is an initial segment of $\hN$,
by \emph{transfer} $\hN$ is an initial segment of ${}^{**}\N$;
in particular, ${}^*\mu>\nu$ for all $\mu\in\hN\setminus\N$ and $\nu\in\hN$.
This produces a hierarchy of different levels of infinities
$\nu_1<{}^*\nu_2<{}^{**}\nu_3<{}^{***}\nu_4<\ldots$ for all $\nu_i\in\hN\setminus\N$.

\subsection{The $u$-equivalence}

Let's now see how this is related to ultrafilters, and in particular to tensor products of ultrafilters.\footnote
{~For more details, the reader is refereed to \cite{dn1}.}

To begin with, we observe that every hypernatural number naturally corresponds to a unique ultrafilter.

\begin{definition}
For $\nu\in\hN$, the \emph{generated ultrafilter} $\UU_\nu$ is the
collection of all subsets of $\N$ whose hyper-extension contains $\nu$:
$$\UU_\nu:=\{A\subseteq\N\mid \nu\in\hA\}.$$
\end{definition}

It can be easily verified that $\UU_\nu$ is indeed an ultrafilter on $\N$.
Clearly, for every finite $n\in\N$, the generated ultrafilter is the corresponding principal
ultrafilter, since $\hA\cap\N=A$, and hence $n\in\hA\Leftrightarrow n\in A$ for every $A\subseteq\N$.
If $\nu\in\hN\setminus\N$ is infinite then $\UU_\nu$ is non-principal,
since an infinite point belongs to a hyper-extension $\hA$ only if the set $A$ is infinite.
By $\mathfrak{c}$-\emph{enlargement}, we observe that every non-principal
ultrafilter is generated by an infinite number $\nu$. In fact, \emph{enlargement}
guarantees that there are plenty of generators for any given non-principal ultrafilter
(actually, at least $\mathfrak{c}$); we note that this fact is not a limitation,
but rather a peculiarity that proves useful in studying partition
regularity problems by nonstandard methods.

\begin{definition}
Two numbers $\nu,\mu\in\hN$ are \emph{$u$-equivalent} when the generated
ultrafilters coincide $\UU_\nu=\UU_\mu$, \emph{i.e.},
when $\nu\in\hA\Leftrightarrow\mu\in\hA$ for every $A\subseteq\N$.
In this case we write $\nu\ueq\mu$.
\end{definition}

The notion of generated ultrafilter is extended in the obvious way
to points in hyper-extensions $\hX$ of arbitrary sets $X$.
In particular, if $(\nu_1,\ldots,\nu_k)\in(\hN)^k$,
$$\UU_{(\nu_1,\ldots,\nu_k)}:=\{X\subseteq\N^k\mid (\nu_1,\ldots,\nu_k)\in\hX\}.$$

Generated ultrafilters can also be naturally defined for
elements in iterated hyper-extensions; for example,
if $\alpha\in{}^{***}\N$ then one sets $\UU_\alpha:=\{A\subseteq\N\mid\alpha\in{}^{***}A\}$.
Consequently, the $u$-equivalence relation is extended to elements and tuples
that belongs to iterated hyper-extensions; for example,
$\alpha\in{}^{**}\N$ and $\beta\in{}^{***}\N$ are $u$-equivalent
when $\alpha\in{}^{**}A\Leftrightarrow\beta\in{}^{***}A$ for every $A\subseteq\N$.

The importance of ultrafilters and the $u$-equivalence in Ramsey theory problems
arises from the following characterizations.

\begin{proposition}\label{PRcharacterizations}
Let $\F$ be a nonempty family of nonempty subsets of $X$. The following are equivalent:
\begin{enumerate}
\item
$\F$ is partition regular on the set $X$, \emph{i.e.},
for every finite coloring $X=C_1\cup\ldots\cup C_r$ there exists
$F\in\F$ such that $F\subseteq C_j$ for some $j$.
\item
There exists an ultrafilter $\U$ on $X$ that is a ``witness" of the partition
regularity of $\F$, \emph{i.e.}, for every $B\in\U$ there exists
$F\in\F$ with $F\subseteq B$.
\item
There exists $G\in{}^*\F$ such that all elements in $G$ are $u$-equivalent to each other over $X$,
\emph{i.e.}, for all $\xi,\zeta\in G$ one has $\xi\in\hB\Leftrightarrow\zeta\in\hB$ for every $B\subseteq X$.
\end{enumerate}
\end{proposition}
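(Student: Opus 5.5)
I will prove the cycle $(1)\Rightarrow(2)\Rightarrow(3)\Rightarrow(1)$, with the enlargement property driving the middle implication.

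\emph{$(1)\Rightarrow(2)$.} Consider the family $\mathcal{G}=\{B\subseteq X\mid \forall F\in\F\ F\not\subseteq B\}$ of sets that contain no element of $\F$. I claim that the complements $\{X\setminus B\mid B\in\mathcal{G}\}$ have the finite intersection property. Suppose instead that $(X\setminus B_1)\cap\ldots\cap(X\setminus B_r)=\emptyset$. Then $X=B_1\cup\ldots\cup B_r$ is a finite coloring in which no color contains an element of $\F$, which contradicts (1). Extend this family to an ultrafilter $\U$. If some $B\in\U$ contained no $F\in\F$, then $B\in\mathcal{G}$, so $X\setminus B\in\U$, which is absurd. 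Hence $\U$ witnesses the partition regularity of $\F$.

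\emph{$(2)\Rightarrow(3)$.} This is the main step. Given $\U$, consider for each $B\in\U$ the set $\F_B=\{F\in\F\mid F\subseteq B\}\subseteq\F$, which is nonempty by (2). The family $\{\F_B\mid B\in\U\}$ has the finite intersection property, since $\F_{B_1}\cap\ldots\cap\F_{B_n}=\F_{B_1\cap\ldots\cap B_n}\neq\emptyset$. It has cardinality at most $|\mathcal{P}(X)|$, so I will assume $\kappa$ is large enough (for $X$ countable, $\kappa=\mathfrak{c}$ suffices). By $\kappa$-enlargement, there is $G\in\bigcap_{B\in\U}{}^*(\F_B)$. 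By transfer, ${}^*(\F_B)=\{F\in{}^*\F\mid F\subseteq\hB\}$, so $G\in{}^*\F$ and $G\subseteq\hB$ for every $B\in\U$.

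Now take $\xi\in G$ and any $B\subseteq X$. Either $B\in\U$, and then $\xi\in\hB$; or $X\setminus B\in\U$, and then $\xi\in{}^*(X\setminus B)=\hX\setminus\hB$. Thus $\UU_\xi=\U$ for every $\xi\in G$, so all elements of $G$ are $u$-equivalent. The point that needs care is that the sets $\F_B$ are standard sets of subsets of $X$, so that enlargement applies to them. One also needs $G\neq\emptyset$, which holds because every $F\in\F$ is nonempty and then transfer applies.

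\emph{$(3)\Rightarrow(1)$.} Let $X=C_1\cup\ldots\cup C_r$ be a finite coloring. By transfer, $\hX={}^*C_1\cup\ldots\cup{}^*C_r$. Pick some $\xi\in G$; this is possible since $G$ is nonempty by transfer, as $G\in{}^*\F$. Then $\xi\in{}^*C_j$ for some $j$. By $u$-equivalence, every $\zeta\in G$ also lies in ${}^*C_j$, so $G\subseteq{}^*C_j$. Hence the elementary statement ``$\exists F\in{}^*\F$ with $F\subseteq{}^*C_j$'' holds. By backward transfer there is $F\in\F$ with $F\subseteq C_j$, which proves (1).
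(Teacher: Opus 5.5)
Your proof is correct. The paper gives no argument of its own for this proposition. It cites \cite[Theorem 5.7]{hs} for $(1)\Leftrightarrow(2)$ and the proof of \cite[Proposition 5.9]{dj} for $(1)\Leftrightarrow(3)$, so it treats the two equivalences separately with $(1)$ as the hub. You instead close a single cycle $(1)\Rightarrow(2)\Rightarrow(3)\Rightarrow(1)$, so your nonstandard set $G$ is built from the witness ultrafilter.

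A direct proof of $(1)\Rightarrow(3)$ is also available, in the style the paper uses later with the sets $\Lambda(A)$ in the ultrafilter proof of Theorem~\ref{HJstrong}. One applies enlargement to the sets $\{F\in\F\mid F\subseteq B\ \text{or}\ F\cap B=\emptyset\}$ for $B\subseteq X$. Their finite intersection property follows by applying partition regularity to the finite partition into atoms generated by finitely many $B$'s. That route avoids ultrafilters entirely. Yours has the advantage of showing explicitly that $\UU_\xi=\U$ for every $\xi\in G$, which is exactly the bridge between $(2)$ and $(3)$.

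Both routes apply enlargement to a family of size up to $2^{|X|}$, so the cardinality caveat you flag is the right one. The paper's $\kappa=\mathfrak{c}$ suffices for its uses of $(1)\Rightarrow(3)$, where $X$ is $\N$ or $W_A$ with $A$ finite.
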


\begin{proof}
The equivalence $(1)\Leftrightarrow(2)$ is \cite[Theorem 5.7]{hs},
and the equivalence $(1)\Leftrightarrow(3)$ is in the proof of
\cite[Proposition 5.9]{dj}. Although \cite[Proposition 5.9]{dj} is for the
case that $\F$ is a family of finite sets, the proof works for the case
that $\F$ is a family of any nonempty sets.
\end{proof}

\subsection{Tensor products of ultrafilters and iterated hyper-extensions}

Recall that if $\U$ and $\V$ are ultrafilters on $\N$ then
their \emph{tensor product} $\U\otimes\V$ is the ultrafilter on $\N\times\N$
defined by setting for every $X\subseteq\N$:
$$X\in\U\otimes\V\ \Longleftrightarrow\ \{n\in\N\mid\{m\in\N\mid (n,m)\in X\}\in\V\}\in\U.$$

With the usual identification of the Cartesian products $I\times(J\times K)=(I\times J)\times K$,
the operation $\otimes$ is associative, and so we can consider
iterated tensor products $\U_1\otimes\ldots\otimes\U_k$ on $\N^k$.

Here is the fundamental property that connects hyper-extensions to tensor products
of ultrafilters.

\begin{proposition}
Let $\nu,\mu\in\hN$. Then the tensor product $\UU_\nu\otimes\UU_\mu=\UU_{(\nu,{}^*\mu)}$
is generated by the pair $(\nu,{}^*\mu)\in\hN\times{}^{**}\N\subset({}^{**}\N)^2$.
\end{proposition}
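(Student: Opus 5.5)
Since both $\UU_\nu\otimes\UU_\mu$ and $\UU_{(\nu,{}^*\mu)}$ are ultrafilters on $\N\times\N$, it is enough to prove one inclusion. I will show that every $X\in\UU_\nu\otimes\UU_\mu$ satisfies $(\nu,{}^*\mu)\in{}^{**}X$. If $X\notin\UU_\nu\otimes\UU_\mu$, then $\N^2\setminus X$ belongs to the tensor product, so $(\nu,{}^*\mu)\in{}^{**}(\N^2\setminus X)={}^{**}\N^2\setminus{}^{**}X$. Hence the two ultrafilters coincide.

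Fix $X\subseteq\N\times\N$ and write $X_n=\{m\in\N\mid (n,m)\in X\}$ for its vertical sections. Consider the set
$$Y:=\{n\in\N\mid \mu\in{}^*(X_n)\}=\{n\in\N\mid X_n\in\UU_\mu\}.$$
By definition of the tensor product, $X\in\UU_\nu\otimes\UU_\mu$ means $Y\in\UU_\nu$, that is, $\nu\in{}^*Y$. The task is therefore to turn the statement $\nu\in{}^*Y$ into $(\nu,{}^*\mu)\in{}^{**}X$, using transfer on the section map.

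The key step is the following. Let $\Phi:\N\to\mathcal{P}(\N)$ be the map $n\mapsto X_n$. The elementary statement
\[\forall n\in\N\ \forall m\in\N\ \bigl((n,m)\in X\leftrightarrow m\in\Phi(n)\bigr)\]
transfers to ${}^*X$, ${}^*\Phi$ and ${}^*\N$. Applying ${}^*$ once more gives the same statement for ${}^{**}X$, ${}^{**}\Phi$ and ${}^{**}\N$.

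We also need to identify $Y$ in terms of $\Phi$. We have $n\in Y\Leftrightarrow \mu\in{}^*(\Phi(n))$. I will avoid writing ${}^*(\Phi(n))$ as a formula in $n$, because that is not elementary. Instead, I apply the star map to the true membership statement $n\in Y\Leftrightarrow\mu\in{}^*(\Phi(n))$ viewed as a statement about the fixed parameters $Y$, $\mu$ and ${}^*\Phi$, the last being the function $n\mapsto{}^*(X_n)$ on $\N$.

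More precisely, let $\Psi:=\{(n,{}^*(X_n))\mid n\in\N\}$. Then for all $n\in\N$ we have $n\in Y\Leftrightarrow\mu\in\Psi(n)$. Transfer gives, for all $\eta\in{}^*\N$,
\[\eta\in{}^*Y\Leftrightarrow{}^*\mu\in{}^*\Psi(\eta).\]
It remains to show ${}^*\Psi(\eta)=\{\zeta\in{}^{**}\N\mid(\eta,\zeta)\in{}^{**}X\}$. This follows by transferring the statement "for all $n$ and all $\zeta\in{}^*\N$, $\zeta\in\Psi(n)\leftrightarrow (n,\zeta)\in{}^*X$", which holds because $\Psi(n)={}^*(X_n)$ is the section of ${}^*X$ at the standard point $n$ (itself a transfer of the definition of $X_n$).

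Taking $\eta=\nu$ then gives $\nu\in{}^*Y\Leftrightarrow(\nu,{}^*\mu)\in{}^{**}X$, which is exactly what is needed. I expect the main obstacle to be this bookkeeping of levels. One has to see that ${}^*(X_n)$ is exactly the $n$-section of ${}^*X$, and then that the set in ${}^{*}\N\times{}^{**}\N$ obtained by applying the star map to "the sections of ${}^*X$" is indeed the corresponding set of sections of ${}^{**}X$. Both facts are routine applications of transfer, but they have to be carried out carefully.
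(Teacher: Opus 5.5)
Your proof is correct and is essentially the paper's argument: both rewrite $\{n\in\N\mid X_n\in\UU_\mu\}$ as $\{n\in\N\mid (n,\mu)\in{}^*X\}$, using that the star of a section at a standard point is the corresponding section of ${}^*X$. Both then transfer this definition to obtain $\{\eta\in{}^*\N\mid(\eta,{}^*\mu)\in{}^{**}X\}$, and your auxiliary map $\Psi$ is just a repackaging of that step; the paragraph on $\Phi$ and the reduction to one inclusion are unnecessary, since your chain already gives the biconditional $\nu\in{}^*Y\Leftrightarrow(\nu,{}^*\mu)\in{}^{**}X$ directly.
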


\begin{proof}
This is a well-known property, the proof of which can be found in several recent papers.
However, since it is the basis of our applications, we provide a self-contained proof here.

Note that $(\nu,{}^*\mu)\in\hN\times{}^{**}\N\subseteq{}^{**}\N\times{}^{**}\N={}^{**}(\N\times\N)$
belongs to the double hyper-extension of $\N\times\N$.
For every $X\subseteq \N\times\N$,
each of the following properties is equivalent to the next one:
\begin{itemize}
\item
$X\in\UU_{(\nu,{}^*\mu)}$.
\item
$(\nu,{}^*\mu)\in{}^{**}X$.
\item
$\nu\in\{\xi_1\in\hN\mid (\xi_1,{}^*\mu)\in{}^{**}X\}={}^*\{x_1\in\N\mid (x_1,\mu)\in\hX\}$.
\item
$\{x_1\in\N\mid (x_1,\mu)\in\hX\}\in\UU_\nu$.
\item
$\{x_1\in\N\mid \mu\in{}^*\{x_2\in\N\mid (x_1,x_2)\in X\}\}\in\UU_\nu$.
\item
$\{x_1\in\N\mid \{x_2\in\N\mid (x_1,x_2)\in X\}\in\UU_\mu\}\in\UU_\nu$.
\item
$X\in\UU_\nu\otimes\UU_\mu$.
\end{itemize}
\end{proof}

This fact can be used to characterize algebra in the space of ultrafilters $\beta\N$.
In fact, recall that the sum of ultrafilters $\U\oplus\V$ is defined by letting for every $A\subseteq\N$:
$$A\in\U\oplus\V\ \Longleftrightarrow\ \{n\in\N\mid A-n\in\V\}\in\U,$$
where $A-n:=\{m\in\N\mid n+m\in A\}$ is the leftward shift of $A$ by $n$.
If the function $\text{Sum}:\N\times\N\to\N$ is the addition, it is directly verified from the definitions that
$A\in\U\oplus\V\Leftrightarrow \text{Sum}^{-1}(A)\in\U\otimes\V$ for every $A\subseteq\N$.
As a consequence, one obtains:

\begin{proposition}\label{tensorproduct}
Let $\nu,\mu\in\hN$. Then the sum of ultrafilters $\UU_\nu\oplus\UU_\mu=\UU_{\nu+{}^*\mu}$ is
generated by $\nu+{}^*\mu\in{}^{**}\N$.
\end{proposition}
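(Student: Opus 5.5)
The plan is to combine the preceding proposition, which identifies $\UU_\nu\otimes\UU_\mu$ with $\UU_{(\nu,{}^*\mu)}$, with the remark that $A\in\U\oplus\V$ if and only if $\text{Sum}^{-1}(A)\in\U\otimes\V$. What remains is to check that hyper-extensions commute with preimages under the addition map.

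First I verify the remark itself. By definition, $A-n\in\V$ means $\{m\mid n+m\in A\}\in\V$, and this set is exactly the $n$-th section of $\text{Sum}^{-1}(A)=\{(n,m)\mid n+m\in A\}$. Hence $\{n\mid A-n\in\V\}\in\U$ is precisely the defining condition for $\text{Sum}^{-1}(A)\in\U\otimes\V$.

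Next, for $A\subseteq\N$ I apply the preceding proposition to $X=\text{Sum}^{-1}(A)$. This gives $A\in\UU_\nu\oplus\UU_\mu$ if and only if $(\nu,{}^*\mu)\in{}^{**}X$. Now $X$ is defined by the elementary formula $X=\{(x,y)\in\N\times\N\mid x+y\in A\}$. Applying transfer twice (for the iterated star map) yields ${}^{**}X=\{(\xi,\zeta)\in{}^{**}\N\times{}^{**}\N\mid \xi+\zeta\in{}^{**}A\}$, where $+$ now denotes the doubly extended addition. Since $\nu\in\hN\subseteq{}^{**}\N$ and ${}^*\mu\in{}^{**}\N$, the condition $(\nu,{}^*\mu)\in{}^{**}X$ is equivalent to $\nu+{}^*\mu\in{}^{**}A$, which is exactly $A\in\UU_{\nu+{}^*\mu}$. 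Both sides are ultrafilters on $\N$ with the same members, so they coincide.

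The only delicate point is that the sum $\nu+{}^*\mu$ must be computed in ${}^{**}\N$ with the transferred operation. For this I need that the hyper-extended addition on ${}^{**}\N$ restricts to the one on $\hN$, so that $\nu$ is treated consistently. This holds because, by transfer, ${}^*\mathrm{Sum}$ extends $\mathrm{Sum}$, and hence ${}^{**}\mathrm{Sum}$ extends ${}^*\mathrm{Sum}$.
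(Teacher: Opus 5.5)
Your proposal is correct and follows the paper's route. The paper derives the statement directly from $\UU_\nu\otimes\UU_\mu=\UU_{(\nu,{}^*\mu)}$ together with the equivalence $A\in\U\oplus\V\Leftrightarrow\text{Sum}^{-1}(A)\in\U\otimes\V$, and you have additionally written out the transfer step ${}^{**}(\text{Sum}^{-1}(A))=\{(\xi,\zeta)\mid\xi+\zeta\in{}^{**}A\}$, which the paper leaves implicit.
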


Clearly, the above proposition can be extended to iterated sums.
For example, if $\nu,\mu,\vartheta\in\hN$
then $\UU_\nu\oplus\UU_\mu\oplus\UU_\vartheta=\UU_{\nu+{}^*\mu+{}^{**}\vartheta}$,
and so forth.

It is worth stressing the fact that different ``levels of infinity" are
needed to generate tensor products, and hence sums of ultrafilters.
Indeed, we point out that in general the numbers $\nu+\mu\,\nueq\,\nu+{}^*\mu$ are not $u$-equivalent.

\begin{remark}
The notions and facts seen above about the sum operation $+$ on $\N$ can
equally be considered for arbitrary associative operations; indeed, one can start
from the multiplication $\cdot$ on $\N$ and see that $\UU_\nu\odot\UU_\mu=\UU_{\nu\cdot{}^*\mu}$,
and more generally, one can start from an arbitrary associative operation $\star$ on any set $S$.

Recall that, by a well-known construction, every semigroup $(S,\star)$ is canonically extended to
a compact right-topological semigroup $(\beta S,\ostar)$ on the corresponding space of ultrafilters,
where the associative operation $\ostar$ on the set $\beta S$
is defined by setting for every $A\subseteq S$:\footnote
{~See \cite[Chapter 4]{hs} for all details.}
$$A\in\U\ostar\V\Longleftrightarrow
\{a\in S\mid\{b\in S\mid a\star b\in A\}\in\V\}\in\U\}.$$
Similarly as done with addition, also in this general case one has that:
\begin{itemize}
\item
$A\in\U\ostar\V\Leftrightarrow \star^{-1}(A)=\{(a,b)\mid a\star b\in A\}\in\U\otimes\V$ for every $A\subseteq S$.
\item
$\UU_\alpha\ostar\UU_\beta=\UU_{\alpha\star{}^*\!\beta}$ for all $\alpha,\beta\in{}^*S$.
\end{itemize}
\end{remark}

We denote $\U^{k\ostar}$ for $\underbrace{\U\ostar\U\ostar\cdots\ostar\U}_{k-\mbox{times}}$.

Since we will be dealing with $n$-th iterated hyperextensions for arbitrarily large $n$,
we will sometimes adopt a more convenient notation to avoid excessive ``stars."
To this end, we will use the symbol $\s$ and define inductively:

\smallskip
\begin{itemize}
\item
$\s^0(X)=X$; and $\s^{n+1}(X)={}^*(\s^n(X))$.
\end{itemize}

\smallskip
In particular, for points $\alpha\in\s^m(\N)$, we will consider
the iterated hyper-extensions $\s^n(\alpha)\in\s^n(\s^m(\N))=\s^{n+m}(\N)$.

In the next proposition, we itemize a few fundamental properties connecting iterated
hyper-extensions and $u$-equivalence. We remark that one could easily formulate more general results
(see \cite[\S 5.1]{dj}), but for simplicity we consider here only those facts that will be actually used in the sequel.

\begin{proposition}\label{hyper-extensions}
Let $(S,\star)$ be a semigroup where, similarly to what was done with $\N$, we assume that
${}^*s=s$ for every $s\in S$. Let $\alpha_i\in{}^*S\setminus S$ for $i=1,\ldots,k$. Then:
\begin{enumerate}
\item
$\UU_{(\alpha_1,\s(\alpha_2),\ldots,\s^{k-1}(\alpha_k))}=
\UU_{\alpha_1}\otimes\UU_{\alpha_2}\otimes\cdots\otimes\UU_{\alpha_k}$.
\item
$\UU_{(\alpha_1\star\s(\beta),\s^2(\alpha_2),\ldots,\s^k(\alpha_k))}=
\UU_{\alpha_1\star\s(\beta)}\!\otimes\UU_{\alpha_2}\otimes\cdots\otimes\UU_{\alpha_k}$ for every $\beta\in{}^*S$.
\end{enumerate}
\end{proposition}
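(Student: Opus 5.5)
Both items come from iterating the two-variable computation in the proposition on $\UU_\nu\otimes\UU_\mu=\UU_{(\nu,{}^*\mu)}$. That computation works verbatim for an arbitrary set $S$ in place of $\N$, and for points lying in arbitrary iterated hyper-extensions.

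The key lemma I would isolate is the following. Let $\gamma\in\s^m(S^p)$ and $\delta\in{}^*S^q$, and assume $\UU_{\gamma}$ and $\UU_\delta$ are computed in the sense of generated ultrafilters on $S^p$ and $S^q$. Then
$$\UU_{(\gamma,\s^m(\delta))}=\UU_\gamma\otimes\UU_\delta,$$
where $(\gamma,\s^m(\delta))\in\s^{m+1}(S^{p+q})$.

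The proof repeats the chain of equivalences given earlier, with $\s^{m+1}$ in place of $**$. Fix $X\subseteq S^p\times S^q$. By transfer applied $m$ times to the set $\{x\mid(x,\delta)\in{}^*X\}$, we have
$$\{\xi\in\s^m(S^p)\mid (\xi,\s^m(\delta))\in\s^{m+1}(X)\}=\s^m\bigl(\{x\in S^p\mid(x,\delta)\in{}^*X\}\bigr).$$
Hence $(\gamma,\s^m(\delta))\in\s^{m+1}(X)$ holds iff $\{x\mid \delta\in{}^*X_x\}\in\UU_\gamma$, where $X_x$ is the fiber over $x$. That condition holds iff $\{x\mid X_x\in\UU_\delta\}\in\UU_\gamma$, which means $X\in\UU_\gamma\otimes\UU_\delta$.

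One point needs care. It is the identification $\s^{m}(\{x\mid\ldots\})$ with the set of $\xi$, which uses that $\s^m(\delta)$ is the image of the parameter $\delta$ under the iterated star map. It also uses that $\s^{m+1}(X)$, viewed from level $m$, is $\s^m$ applied to the set ${}^*X$. I would spell this out by induction on $m$, since each $\s$ satisfies transfer.

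With the lemma in hand, item (1) follows by induction on $k$. The case $k=1$ is trivial. For the inductive step, take $\gamma=(\alpha_1,\ldots,\s^{k-2}(\alpha_{k-1}))\in\s^{k-1}(S^{k-1})$, $m=k-1$ and $\delta=\alpha_k$. The lemma gives $\UU_\gamma\otimes\UU_{\alpha_k}$, and associativity of $\otimes$ together with the inductive hypothesis finishes the argument.

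For item (2), set $\gamma_1=\alpha_1\star\s(\beta)\in\s^2(S)$. Here $\s(\beta)\in{}^{**}S$, so $\gamma_1$ lives at level $2$, and $\UU_{\gamma_1}$ is its generated ultrafilter on $S$. Now induct exactly as before. At step $j\ge2$ the tuple built so far lies in $\s^{j}(S^{j-1})$, and the next coordinate is $\s^{j}(\alpha_j)$, so the lemma applies with $m=j$. This yields $\UU_{\gamma_1}\otimes\UU_{\alpha_2}\otimes\cdots\otimes\UU_{\alpha_k}$.

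The hypothesis that the $\alpha_i$ are nonstandard is not actually needed for these equalities. The hypothesis ${}^*s=s$ ensures that $S\subseteq{}^*S$ and that the iterated extensions nest properly. The main obstacle is only the bookkeeping of levels in the lemma's transfer step: one must be sure that the shifted coordinate is exactly $\s^m$ of a level-one point, not merely some point of the correct level.
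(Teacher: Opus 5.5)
Your proof is correct and follows the paper's route: the same transfer chain of equivalences as in the two-variable case $\UU_\nu\otimes\UU_\mu=\UU_{(\nu,{}^*\mu)}$, which the paper writes out explicitly only for item (2) with $k=2$, where the relevant level is $\s^2$. Your general-level lemma $\UU_{(\gamma,\s^m(\delta))}=\UU_\gamma\otimes\UU_\delta$, combined with induction on $k$ and associativity of $\otimes$, supplies the general case that the paper leaves to the reader as ``the same way, only with a more elaborate notation.''
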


\begin{proof}
$(1)$. This is a straight generalization of Proposition \ref{tensorproduct}, which is
the particular case when $k=2$.

$(2)$. This result is implicitly included in \cite{dj}.\footnote
{~See in particular \S 5.1 and Proposition 5.7.}
However, for completeness, we provide a detailed proof here when $k=2$;
the general case can then be proved in the same way, only with a more elaborate notation.

Note that $(\alpha_1\star{}^*\beta,{}^{**}\alpha_2)\in{}^{**}S\times{}^{***}S\subseteq
{}^{***}S\times{}^{***}S$ belongs to the triple hyper-extension of $S\times S$.
For every $X\subseteq S\times S$,
each of the following properties is equivalent to the next one:
\begin{itemize}
\item
$X\in\UU_{(\alpha_1\star{}^*\beta,{}^{**}\alpha_2)}$.
\item
$(\alpha_1\star{}^*\beta,{}^{**}\alpha_2)\in{}^{***}X$.
\item
$\alpha_1\star{}^*\beta\in{}^{**}\{x_1\in S\mid (x_1,\alpha_2)\in\hX\}$.
\item
$\{x_1\in S\mid (x_1,\alpha_2)\in\hX\}\in\UU_{\alpha_1\star{}^*\beta}$.
\item
$\{x_1\in S\mid \alpha_2\in{}^*\{x_2\in S\mid (x_1,x_2)\in X\}\in\UU_{\alpha_1\star{}^*\beta}$.
\item
$\{x_1\in S\mid \{x_2\in S\mid (x_1,x_2)\in X\}\in\UU_{\alpha_2}\}\in\UU_{\alpha_1\star{}^*\beta}$.
\item
$X\in\UU_{\alpha_1\star{}^*\beta}\otimes\UU_{\alpha_2}$.
\end{itemize}
\end{proof}

\begin{corollary}
If $\alpha_i,\alpha'_i\in{}^*S\setminus S$
are such that $\alpha_i\ueq\alpha'_i$ for $i=1,\ldots,k$,
and $\beta,\beta'\in{}^*S$ are such that $\beta\ueq\beta'$, then:
\begin{enumerate}
\item
$(\alpha_1,\s(\alpha_2),\ldots,\s^{k-1}(\alpha_k))\,\ueq\,
(\alpha'_1,\s(\alpha'_2),\ldots,\s^{k-1}(\alpha'_k))$.
\item
$(\alpha_1\star\s(\beta),\s^2(\alpha_2),\ldots,\s^k(\alpha_k))\,\ueq\,
(\alpha'_1\star\s(\beta'),\s^2(\alpha'_2),\ldots,\s^k(\alpha'_k))$.
\item
$\UU_{\alpha_1\star\s(\alpha_2)\star\ldots\star\s^{k-1}(\alpha_k)}=
\UU_{\alpha_1}\ostar\UU_{\alpha_2}\ostar\cdots\ostar\UU_{\alpha_k}$,
\item
$\UU_{\alpha_1\star\s(\beta)\star\s^2(\alpha_2)\star\ldots\star\s^k(\alpha_k)}=
\UU_{\alpha_1}\ostar\UU_\beta\ostar\UU_{\alpha_2}\ostar\cdots\ostar\UU_{\alpha_k}$.
\end{enumerate}
\end{corollary}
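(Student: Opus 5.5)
The corollary follows directly from Proposition \ref{hyper-extensions}. Two points are $u$-equivalent exactly when they generate the same ultrafilter, and a tensor product depends only on its factor ultrafilters. So it suffices to express each generated ultrafilter in terms of $\UU_{\alpha_i}$ and $\UU_\beta$ alone.

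For (1), Proposition \ref{hyper-extensions}(1) gives
$$\UU_{(\alpha_1,\s(\alpha_2),\ldots,\s^{k-1}(\alpha_k))}=\UU_{\alpha_1}\otimes\cdots\otimes\UU_{\alpha_k}.$$
Since $\UU_{\alpha_i}=\UU_{\alpha'_i}$, this equals $\UU_{\alpha'_1}\otimes\cdots\otimes\UU_{\alpha'_k}$. Applying the same proposition to the primed tuple, this is the ultrafilter generated by $(\alpha'_1,\s(\alpha'_2),\ldots,\s^{k-1}(\alpha'_k))$, which gives the $u$-equivalence.

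For (2), Proposition \ref{hyper-extensions}(2) reduces the claim to showing $\UU_{\alpha_1\star\s(\beta)}=\UU_{\alpha'_1\star\s(\beta')}$. By the Remark, $\UU_{\alpha_1\star{}^*\beta}=\UU_{\alpha_1}\ostar\UU_\beta$, which depends only on the two ultrafilters. The primed version gives the same ultrafilter, and (2) follows as in (1).

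For (3), we use the second bullet of the Remark: $A\in\U_1\ostar\cdots\ostar\U_k$ if and only if $\star_k^{-1}(A)\in\U_1\otimes\cdots\otimes\U_k$, where $\star_k:S^k\to S$ is the $k$-fold product. This holds by induction on $k$ from the two-factor case, using associativity of $\otimes$ and $\ostar$.

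Now let $\gamma=(\alpha_1,\s(\alpha_2),\ldots,\s^{k-1}(\alpha_k))\in\s^{k-1}(S^k)$. By transfer applied to $\star_k$ (using $\s^{k-1}(\star_k)$, which restricts correctly to the lower levels), we have
$$\gamma\in\s^{k-1}(\star_k^{-1}(A))\iff \alpha_1\star\s(\alpha_2)\star\cdots\star\s^{k-1}(\alpha_k)\in\s^{k-1}(A).$$
Combined with (1) of Proposition \ref{hyper-extensions}, this gives (3).

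For (4), apply (3) to the $(k+1)$-tuple $\alpha_1,\beta,\alpha_2,\ldots,\alpha_k$. The one subtlety is that $\beta$ may be standard, while the proposition assumed the $\alpha_i\notin S$. If $\beta=s\in S$ then $\s(\beta)=s$, and $\UU_\beta$ is principal. The tensor-product identity still holds in this case: the chain of equivalences in the proof of the proposition never used non-standardness.

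The main obstacle is bookkeeping rather than mathematics. We need to check that the level-shifted product $\s^{k-1}(\star)$ agrees with $\star$ on the lower levels; this follows from ${}^*s=s$ and transfer of the graph of $\star$.
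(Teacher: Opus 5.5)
Your proof is correct. For (1) and (2) it is exactly the paper's argument: equal factor ultrafilters give equal tensor products, and $\UU_{\alpha_1\star\s(\beta)}=\UU_{\alpha_1}\ostar\UU_\beta$ depends only on $\UU_{\alpha_1}$ and $\UU_\beta$.

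For (3) and (4) your route differs slightly. The paper says both follow from the binary identity $\UU_{\alpha\star\s(\beta)}=\UU_\alpha\ostar\UU_\beta$, i.e.\ by iterating it. You instead push the $k$-fold tensor product of Proposition~\ref{hyper-extensions}(1) forward along the $k$-fold multiplication $\star_k$, using the $k$-ary version of $A\in\U\ostar\V\Leftrightarrow\star^{-1}(A)\in\U\otimes\V$. You then read (4) as the $(k+1)$-term instance of (3).

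The paper's version is shorter. However, iterating the binary identity really uses it with one factor at a higher level, e.g.\ $\alpha_2\star\s(\alpha_3)\in\s^2(S)$, whereas it is only stated for $\alpha,\beta\in{}^*S$. Your version uses Proposition~\ref{hyper-extensions}(1) as stated, at the price of the easy $k$-ary pushforward lemma. You also correctly handle the one extra issue your route creates: $\beta$ may be standard. That is harmless, since the hypothesis $\alpha_i\notin S$ is never used in the chain of equivalences.

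There is one indexing slip to correct. The tuple $\gamma=(\alpha_1,\s(\alpha_2),\ldots,\s^{k-1}(\alpha_k))$ lies in $\s^k(S^k)$, not in $\s^{k-1}(S^k)$, because $\s^{k-1}(\alpha_k)\in\s^k(S)\setminus\s^{k-1}(S)$. The transfer step should read $\gamma\in\s^k(\star_k^{-1}(A))\iff\alpha_1\star\s(\alpha_2)\star\cdots\star\s^{k-1}(\alpha_k)\in\s^k(A)$, with the product computed by $\s^k(\star_k)$. As you wrote it, the left-hand side is never satisfied. The same shift from $\s^{k-1}$ to $\s^k$ applies to your closing remark about the level-shifted product.
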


\begin{proof}
$(1)$ directly follows from the previous proposition; indeed, if
$\U_i:=\UU_{\alpha_i}=\UU_{\alpha'_i}$
then $(\alpha_1,\s(\alpha_2),\ldots,\s^{k-1}(\alpha_k))$ and $(\alpha'_1,\s(\alpha'_2),\ldots,\s^{k-1}(\alpha'_k))$
both generate the same ultrafilter $\U_1\otimes\cdots\otimes\U_k$.

$(2)$ can be shown by the same way as of $(1)$ that the ultrafilter
generated by the left side is the same as that of the right side.

$(3)$ and $(4)$ follow from the fact that
$\UU_{\alpha_1\ostar\,\s(\beta)}=\UU_{\alpha}\ostar\UU_{\beta}$.
\end{proof}

\section{A proof of Van der Waerden's Theorem}

As a warm-up towards Hales-Jewett Theorem and its multidimensional extension
that will be proved later, we present here a first application of
iterated hyper-extensions and the $u$-equivalence.

\begin{theorem}[Van der Waerden]
Let $\ell\in\N$. For every finite coloring of $\N$
there exists a monochromatic $\ell$-term arithmetic progression
$$a, a+d, a+2d, \ldots, a+(\ell-1)d.$$
\end{theorem}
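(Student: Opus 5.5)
We argue by induction on $\ell$, with induction hypothesis the statement itself: \emph{for every finite coloring of $\N$ there is a monochromatic $\ell$-term progression with $d\ge1$}. The case $\ell=1$ is trivial.

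\textbf{Step 1 (from the hypothesis to a u-equivalent progression).} Let $\F$ be the family of $\ell$-term progressions $\{a,a+d,\ldots,a+(\ell-1)d\}$ with $d\ge1$. The induction hypothesis says that $\F$ is partition regular. By Proposition \ref{PRcharacterizations}, $(1)\Rightarrow(3)$, there exist $a,d\in\hN$ with $d\ge1$ such that
$$a\,\ueq\,a+d\,\ueq\,\cdots\,\ueq\,a+(\ell-1)d .$$
These points are pairwise distinct. If $a$ were finite, its ultrafilter would be principal, which would force $a=a+d$; this is impossible. So every $a+id$ with $i<\ell$ is infinite, and the Corollary after Proposition \ref{hyper-extensions} applies to them.

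\textbf{Step 2 (color focusing across levels).} Fix an $r$-coloring $\N=C_1\cup\cdots\cup C_r$. Each point of $\s^m(\N)$ lies in exactly one $\s^m(C_j)$, and $u$-equivalent points of the same level lie in the same $\s^m(C_j)$. Work in $\s^{r+1}(\N)$. For $t=0,\ldots,r$ put
$$Q_t:=\sum_{s<t}\s^s(a+\ell d)+\sum_{t\le s\le r}\s^s(a).$$
These are $r+1$ points, so by pigeonhole some $t<t'$ give $Q_t,Q_{t'}\in\s^{r+1}(C_j)$ for the same $j$. Now set $E:=\sum_{t\le s<t'}\s^s(d)\ge1$ and consider the progression $Q_t+jE$ for $j=0,\ldots,\ell$.

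\textbf{Step 3 (checking the progression).} For $j=\ell$ we get $Q_t+\ell E=Q_{t'}$. For $j<\ell$ we have
$$Q_t+jE=\sum_{s<t}\s^s(a+\ell d)+\sum_{t\le s<t'}\s^s(a+jd)+\sum_{s\ge t'}\s^s(a).$$
The $s$-th summand differs from that of $Q_t$ only by replacing some $\s^s(a)$ with $\s^s(a+jd)$, and $a+jd\ueq a$. The Corollary after Proposition \ref{hyper-extensions} (items (1) and (3) with $\star=+$) gives that iterated sums $\sum_s\s^s(x_s)$ generate $\UU_{x_0}\oplus\cdots\oplus\UU_{x_r}$. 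Hence $Q_t+jE\ueq Q_t$, and so $Q_t+jE\in\s^{r+1}(C_j)$. Thus $\s^{r+1}(C_j)$ contains an $(\ell+1)$-term progression with positive difference. This is an elementary property of $\s^{r+1}(C_j)$, so backward transfer for the iterated star map yields such a progression in $C_j$.

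The main obstacle is Step 3. We must apply the Corollary to summands $a+\ell d$ whose ultrafilter is unrelated to that of $a$, and to a mix of fixed summands (those with $s<t$) and varying ones. The Corollary as stated only involves finitely many levels each carrying its own class, so it suffices. Still, one has to check carefully that replacing the entries at some levels by $u$-equivalent ones preserves the generated sum ultrafilter, with every entry infinite as required.
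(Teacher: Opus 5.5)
Your proposal is correct and follows the paper's own argument. The paper writes out the step $\ell=3\to4$ with the points $\xi_t=\sum_{s<t}\s^s(a+3d)+\sum_{s\ge t}\s^s(a)$ in ${}^{****}\N$, applies pigeonhole, and fills in the progression with difference ${}^{**}d+{}^{***}d$ by $u$-equivalence; this is exactly your $Q_t$, $E$ and $Q_t+jE$, written in general form. Two cosmetic points only: you use $j$ both for the colour and for the progression index, and your argument that $a$ is infinite needs $\ell\ge2$. The latter is harmless, since for $\ell=1$ no nontrivial $u$-equivalence is used.
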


\begin{proof}
We proceed by induction on $\ell$.
The base cases $\ell=1,2$ are trivial.
To illustrate the inductive step, we see here in detail how to pass from $\ell=3$ to $\ell=4$.
The general inductive step will then be clear.

By the hypothesis, the family $\F=\{\{a, a+d, a+2d\}\mid a,d\in\N\}$ is partition regular on $\N$ and so,
by the nonstandard characterization given by Proposition \ref{PRcharacterizations},
there exist $u$-equivalent hypernatural numbers:
$$a\ \ueq\ a+d\ \ueq\ a+2d.$$
Now let a finite coloring of $\N$ be given; by way of example,
let $\N=C_1\cup C_2\cup C_3$ be a 3-coloring.
Consider the following 4 elements in ${}^{****}\N$:

\medskip
\begin{tabular}{cclclclcl}
$\xi_1$ & $=$ & $a+3d$ & $+$ & ${}^*a$  & $+$ &  ${}^{**}a$ & $+$ & ${}^{***}a$
\\
$\xi_2$ & $=$  & $a+3d$ & $+$ & ${}^*a+3\,{}^*\!d$ & $+$ & ${}^{**}a$ & $+$ & ${}^{***}a$
\\
$\xi_3$ & $=$ & $a+3d$ & $+$ & ${}^*a+3\,{}^*\!d$ & $+$ & ${}^{**}a+3\,{}^{**}\!d$ & $+$ & ${}^{***}a$
\\
$\xi_4$ & $=$ & $a+3d$ & $+$ & ${}^*a+3\,{}^*\!d$ & $+$ & ${}^{**}a+3\,{}^{**}\!d$ & $+$ & ${}^{***}a+3\,{}^{***}\!d$
\end{tabular}

\medskip
Since ${}^{****}\N={}^{****}C_1\cup{}^{****}C_2\cup{}^{****}C_3$,
by the pigeonhole principle there must be two elements of the same
color, say $\xi_2, \xi_4\in{}^{****}C_j$.
Because of the different ``levels", by
Proposition \ref{hyper-extensions} we have that the elements
$\xi_2, \xi_2', \xi_2''$ below are $u$-equivalent to each other,
and hence the following four elements of ${}^{****}\N$ belong to the same color ${}^{****}C_j$:

\medskip
\begin{tabular}{cclclclcl}
$\xi_2$ & $=$  & $a+3d$ & $+$ & ${}^*a+3\,{}^*\!d$ & $+$ & ${}^{**}a$ & $+$ & ${}^{***}a$
\\
$\xi'_2$ & $=$  & $a+3d$ & $+$ & ${}^*a+3\,{}^*\!d$ & $+$ & ${}^{**}a+{}^{**}\!d$ & $+$ &
${}^{***}a+{}^{***}\!d$
\\
$\xi''_2$ & $=$  & $a+3d$ & $+$ & ${}^*a+3\,{}^*\!d$ & $+$ & ${}^{**}a+2\,{}^{**}\!d$ & $+$ &
${}^{***}a+2\,{}^{***}\!d$
\\
$\xi_4$ & $=$ & $a+3d$ & $+$ & ${}^*a+3\,{}^*\!d$ & $+$ & ${}^{**}a+3\,{}^{**}\!d$ & $+$ &
${}^{***}a+3\,{}^{***}\!d$
\end{tabular}

\medskip
We observe that $\xi_2, \xi'_2, \xi''_2, \xi_4\in{}^{****}C_j$ form an arithmetic progression
of common difference ${}^{**}\!d+{}^{***}\!d$.
By backward \emph{transfer}, we obtain the existence of a monochromatic $4$-term
arithmetic progression in $C_j$.
\end{proof}

\begin{remark}
A translation of the above nonstandard proof
in terms of iterated sums $\U\oplus\ldots\oplus\U$ of non-principal ultrafilters $\U$ on $\N$
was presented in \cite{dn3}. The reader may find
it interesting to compare the two proofs, which
formalize the same ideas into two different languages,
namely the language of nonstandard analysis and the language of ultrafilters,
respectively.
\end{remark}

\section{A new proof of Hales-Jewett Theorem}\label{sec-HJ}

Hales-Jewett Theorem is a classic result in Ramsey theory that
can be seen as a generalization of Van der Waerden's theorem
in the framework of words.

Recall that a \emph{word} is a finite string $w=a_1\cdots a_n$ of \emph{letters} $a_i\in A$
taken from a nonempty set $A$, called the \emph{alphabet}.
The number of letters that are used to form the word $w$
is called the \emph{length} of $w$, and denoted $\ell(w)$.
The set of all words over the alphabet $A$ is denoted $W_A$.
We often write $W$ for $W_A$ when $A$ is clear from the context.

The \emph{concatenation} of two words $w=a_1\cdots a_n$ and
$w'=a'_1\cdots a'_m$ is the word $w\star w'=a_1\cdots a_n a'_1\cdots a'_m$
obtained by joining the two strings end-to-end to form a single string.
Clearly, concatenation $\star$ is an associative operation, but it is not commutative,
except when $A=\{a\}$ contains a single letter.

A \emph{variable word} over $A$ is a word $w(v)\in W_{A\cup\{v\}}\setminus W_A$ over
the alphabet $A\cup\{v\}$ where the \emph{variable} $v\notin A$ actually occurs.
If $w(v)$ is a variable word over $A$ and $a\in A$ is a letter, then one can consider
the word $w(a)\in W_A$ obtained by replacing each
occurrence of the variable $v$ in $w(v)$ by $a$.
Such a word $w(a)$ is called an \emph{instance} of the variable word $w(v)$.

\begin{remark}\label{singleletter}
When the alphabet $A=\{a\}$ consists of a single letter,
every word $w\in W_A$ has the form $w=a^n:=a\star\ldots\star a$ ($n$-times) and so
it is uniquely determined by its length.
In this case $W_A$ can be identified with the set of natural numbers.
\end{remark}

\begin{theorem}[Hales-Jewett]
Let $W=W_A$ be the set of words over a finite alphabet $A$.
For every finite coloring of $W$ there exists a variable word $w(v)$ such that
the set of instances $w[A]:=\{w(a)\mid a\in A\}$ is monochromatic.
\end{theorem}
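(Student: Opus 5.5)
We proceed by induction on the size $m=|A|$ of the alphabet, following the pattern of the proof of Van der Waerden's theorem above. If $m=1$ the statement is trivial: take $w(v)=v$, whose only instance is $a_1$. So let $A=\{a_1,\ldots,a_m\}$ with $m\ge 2$, set $A'=\{a_1,\ldots,a_{m-1}\}$, and assume the theorem for $A'$. Since $W_{A'}\subseteq W_A$, restricting any finite coloring of $W_A$ to $W_{A'}$ shows the following. The family $\F=\{w[A']\mid w(v)\text{ a variable word over }A'\}$ is partition regular on $W=W_A$. Proposition \ref{PRcharacterizations} then yields $\omega(v)\in{}^*(W_{A'\cup\{v\}}\setminus W_{A'})$ whose instances are pairwise $u$-equivalent over $W$:
$$\omega(a_1)\ueq\omega(a_2)\ueq\cdots\ueq\omega(a_{m-1}).$$
By Remark-style length considerations, all these instances lie in ${}^*W\setminus W$, since they have infinite length. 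If not, we can first concatenate a long enough standard prefix; in any case this is easily arranged.

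Now fix a coloring $W=C_1\cup\ldots\cup C_r$. We work in $\s^r(W)$, using the $r$ levels $\s^0,\ldots,\s^{r-1}$. For $j=0,\ldots,r$ put
$$\xi_j=\omega(a_m)\star\s(\omega(a_m))\star\cdots\star\s^{j-1}(\omega(a_m))\star\s^{j}(\omega(a_1))\star\cdots\star\s^{r-1}(\omega(a_1)).$$
That is, the first $j$ levels carry the instance at $a_m$ and the remaining levels carry the instance at $a_1$. These are $r+1$ elements of $\s^r(W)=\bigcup_t\s^r(C_t)$, so by pigeonhole two of them, $\xi_i$ and $\xi_j$ with $i<j$, lie in the same $\s^r(C_t)$.

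For each $a\in A'$, let $\xi_i^{a}$ be obtained from $\xi_i$ by replacing every factor $\s^{s}(\omega(a_1))$ with $i\le s\le j-1$ by $\s^s(\omega(a))$. We claim $\xi_i^{a}\ueq\xi_i$. The tuple of levels $(\omega(a),\s(\omega(a)),\ldots)$ is $u$-equivalent to the corresponding tuple built from $a_1$ by Corollary (1). The word $\xi_i^a$ is the image of this tuple, together with the fixed lower-level prefix $\omega(a_m)\star\cdots\star\s^{i-1}(\omega(a_m))$, under the concatenation map. The prefix is handled exactly as $\alpha_1\star\s(\beta)$ in Proposition \ref{hyper-extensions}(2) and Corollary (2),(4): the generated ultrafilter is $\UU_{\omega(a_m)}^{i\ostar}\ostar\UU_{\omega(a)}^{(j-i)\ostar}\ostar\UU_{\omega(a_1)}^{(r-j)\ostar}$, and this does not depend on $a\in A'$. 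Hence all $\xi_i^{a}$ ($a\in A'$) and $\xi_j$ lie in $\s^r(C_t)$.

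These $m$ elements are exactly the instances $\Omega(a)$, $a\in A$, of the single variable word
$$\Omega(v)=\omega(a_m)\star\cdots\star\s^{i-1}(\omega(a_m))\star\s^{i}(\omega(v))\star\cdots\star\s^{j-1}(\omega(v))\star\s^j(\omega(a_1))\star\cdots\star\s^{r-1}(\omega(a_1)).$$
Here the same variable $v$ is substituted at all levels $i,\ldots,j-1$; it actually occurs because $j>i$. Indeed $\Omega(a_m)=\xi_j$, and $\Omega(a)=\xi_i^a$ for $a\in A'$; note $\xi_i=\Omega(a_1)$. So in $\s^r$ there is a variable word over $A$ all of whose instances lie in $\s^r(C_t)$. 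Backward transfer, applied $r$ times to this elementary statement, gives a standard variable word $w(v)$ with $w[A]\subseteq C_t$.

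The step I expect to need the most care is the $u$-equivalence $\xi_i^a\ueq\xi_i$. It requires a mild extension of Corollary (2)/(4) in which a whole block of lower levels, rather than a single $\beta$, serves as a common prefix. It also requires checking that substituting simultaneously at several levels is governed by iterated products of the ultrafilters $\UU_{\omega(a)}$. I would prove it by the same chain of equivalences as in Proposition \ref{hyper-extensions}(2), peeling one level at a time.
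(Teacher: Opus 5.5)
Your proposal is correct and is essentially the paper's proof of the strong version (Theorem \ref{HJstrong}) specialized to $B=A$. It has the same ingredients: a nonstandard witness for the inductive hypothesis, $r+1$ block-concatenations across iterated levels, pigeonhole, the variable placed in the middle block, and backward transfer; the only differences are cosmetic (you induct on $|A|$ with variable words over $A'$ rather than on $|B|$, and use $r$ levels instead of $r+1$).

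The step you flag needs no extension, since part (3) of the Corollary after Proposition \ref{hyper-extensions} already gives $\UU_{\xi_i^a}=\UU_{\omega(a_m)}^{i\ostar}\ostar\UU_{\omega(a)}^{(j-i)\ostar}\ostar\UU_{\omega(a_1)}^{(r-j)\ostar}$. Your aside about infinite length is also unnecessary: it holds automatically for $m\ge 3$, it is irrelevant for $m=2$, and prefixing a standard word would not produce it anyway.
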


More generally, given a variable word $w(v)$ and a word $b\in W$
(not necessarily a letter of the alphabet),
one can consider the instance $w(b)\in W$ obtained by replacing each
occurrence of the variable $v$ in $w(v)$ by $b$. Note that
the length of $w(v)$ and the length of $w(b)$ are different
unless $b$ has length $1$. The previous theorem also holds for this extended notion of instance.

\begin{theorem}[Hales-Jewett -- strong version]\label{HJstrong}
Let $W=W_A$ be the set of words over a finite alphabet $A$,
and let $B\subset W$ be a finite set. For every finite coloring of $W$
there exists a variable word $w(v)$ such that
the set of instances $w[B]:=\{w(b)\mid b\in B\}$ is monochromatic.
\end{theorem}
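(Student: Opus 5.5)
We argue by induction on the number of elements of $B$, in the same way as the proof of Van der Waerden's Theorem above. Here concatenation $\star$ plays the role of addition and instances of variable words play the role of arithmetic progressions. If $|B|=1$ the statement is trivial: take any variable word $w(v)$, so $w[B]$ is a singleton. Now let $B=\{b_1,\ldots,b_m\}$ and assume the theorem for $B'=\{b_1,\ldots,b_{m-1}\}$. Then the family $\{w[B']\mid w(v)\text{ a variable word}\}$ is partition regular on $W$. By Proposition \ref{PRcharacterizations} and \emph{transfer}, there is a hyper-variable word $\omega(v)\in{}^*W_{A\cup\{v\}}$ in which $v$ occurs and such that
$$\omega(b_1)\ueq\omega(b_2)\ueq\cdots\ueq\omega(b_{m-1}).$$
Since each $b\in B$ is standard, ${}^*b=b$, and therefore $\s^k(\omega)(b)=\s^k(\omega(b))$ for every $k$, by transfer of the substitution map.

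Fix a coloring $W=C_1\cup\ldots\cup C_r$. For $i=0,1,\ldots,r$ define the following $r+1$ elements of $\s^{r+1}(W)$:
$$\xi_i=\omega(b_m)\star\s(\omega)(b_m)\star\cdots\star\s^{i-1}(\omega)(b_m)\star\s^{i}(\omega)(b_1)\star\cdots\star\s^{r}(\omega)(b_1).$$
By pigeonhole there are $i<j$ with $\xi_i,\xi_j\in\s^{r+1}(C_c)$ for some $c$. For $t=1,\ldots,m-1$ let $\eta_t$ be the word obtained from $\xi_i$ by replacing $b_1$ with $b_t$ at the levels $i,\ldots,j-1$ (and leaving $b_1$ at levels $\geq j$). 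Also put $\eta_m:=\xi_j$. Let $\beta$ be the common prefix $\omega(b_m)\star\cdots\star\s^{i-1}(\omega)(b_m)$. Then every $\eta_t$ is the instance $\Omega(b_t)$ of the single variable word
$$\Omega(v)=\beta\star\s^i(\omega)(v)\star\cdots\star\s^{j-1}(\omega)(v)\star\s^j(\omega)(b_1)\star\cdots\star\s^r(\omega)(b_1),$$
and $v$ actually occurs in $\Omega(v)$.

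The key step is to show $\eta_t\ueq\xi_i$ for each $t<m$. Then every $\eta_t$ lies in $\s^{r+1}(C_c)$, so $\Omega[B]\subseteq\s^{r+1}(C_c)$, and backward \emph{transfer} of the elementary statement ``there is a variable word $w(v)$ with $w(b)\in C_c$ for all $b\in B$'' finishes the proof. The $u$-equivalence should follow from the tensor product description in Proposition \ref{hyper-extensions} and its Corollary. One writes $\xi_i$ and $\eta_t$ as $\beta\star\s^i(\alpha_i)\star\s^{i+1}(\alpha_{i+1})\star\cdots\star\s^r(\alpha_r)$ and $\beta\star\s^i(\alpha'_i)\star\cdots\star\s^r(\alpha'_r)$ respectively, where $\alpha_k=\omega(b_1)$, and $\alpha'_k=\omega(b_t)\ueq\alpha_k$ for $k<j$ while $\alpha'_k=\alpha_k$ for $k\ge j$. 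Both elements then generate the same ultrafilter, namely $\UU_\beta\ostar\UU_{\alpha_i}\ostar\cdots\ostar\UU_{\alpha_r}$, computed on the semigroup $(W,\star)$ as in part (4) of the Corollary.

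I expect the main obstacle to be this last point. The Corollary is stated with a single middle term $\s(\beta)$ and with every $\alpha_k$ in ${}^*S\setminus S$. Here we need a prefix $\beta\in\s^i(W)$ that is a product of several lower levels, and we need iterated products indexed from level $i$ on. I would first prove the general identity $\UU_{\gamma_0\star\s(\gamma_1)\star\cdots\star\s^n(\gamma_n)}=\UU_{\gamma_0}\ostar\cdots\ostar\UU_{\gamma_n}$ for arbitrary $\gamma_k\in{}^*W$, by induction on $n$ using the two-factor case $\UU_{\alpha\star\s(\gamma)}=\UU_\alpha\ostar\UU_\gamma$. Grouping $\beta$ as one element of $\s^i(W)$ that is itself generated in this way then gives the required equivalence.
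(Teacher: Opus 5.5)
Your proposal is correct and follows essentially the same route as the paper: induction on $|B|$, the $r+1$ elements of $\s^{r+1}(W)$ built by concatenating iterated extensions of two instances of $\omega$, pigeonhole, putting the variable in the block of levels where the two same-colored elements differ, and backward transfer. Your $\xi_i$ are simply the mirror image of the paper's $\xi_s$, with $b_m$ at the low levels instead of the high ones. The $u$-equivalence you flag as the main obstacle is already covered by part (3) of the Corollary. Apply it to the whole product $\alpha_0\star\s(\alpha_1)\star\cdots\star\s^r(\alpha_r)$, where each $\alpha_k$ is an instance $\omega(b)$; then there is no need to group the prefix $\beta$ separately.
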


An ultrafilter proof of this last result was obtained by
V.~Bergelson, A.~Blass, and N.~Hindman \cite{bbh} as a particular case
of stronger variable-word partition theorems (see also \cite{bl}).
Their proof makes an essential use of minimal idempotent ultrafilters.

\smallskip
We observe that the latter Hales-Jewett Theorem formulation is actually stronger than then former,
already when the alphabet $A=\{a\}$ consists of a single letter;
for instance, in this case the former formulation gives a trivial property,
while the latter amounts to Van der Waerden's Theorem.
Let us see this in detail.

Recall that, as noticed above, one can identify each word $a^n:=a\star\cdots\star a$ ($n$-times) in $W_{\{a\}}$
with its length $\ell(a^n)=n\in\N$. Given a finite coloring $\N=C_1\cup\ldots\cup C_r$ and $k\in\N$,
consider the corresponding finite coloring $W_{\{a\}}=C'_1\cup\ldots C'_r$
where $C'_s:=\{w\in W_A\mid\ell(w)\in C_i\}$, and take
the finite set of words $\{a,a^2,\ldots,a^k\}$. By the strong version
of Hales-Jewett Theorem, there exists
a variable word $w(v)$ over $\{a\}$ such that the set of instances
$\{w(a),w(a^2),\ldots,w(a^k)\}\subseteq C'_j$ is monochromatic.
Finally, observe that $\ell(w(a)),\ldots,\ell(w(a^k))\in C_j$ is
a monochromatic arithmetic progression of length $k$;
precisely, if $d$ is the number of times that the variable $v$ appears in $w(v)$,
and $c:=\ell(w(v))-d$, then $\ell(w(a^i))=c+id$ for every $i=1,\ldots,k$.

Similarly, it is also easily seen that the converse property holds,
\emph{i.e.}, Van der Waerden's Theorem directly implies
the strong version of Hales-Jewett Theorem where the alphabet
$A=\{a\}$ is a singleton.

\begin{proof}[Proof of Theorem \ref{HJstrong}]
Without loss of generality, we can assume that ${}^*a=a$ for every letter $a\in A$,
and hence $\hA=A$, since $A$ is finite.
Note also that ${}^*b=b$ for every word $b\in W$.
The hyper-extension ${}^*W$
is the set of \emph{internal words} on $A$ consisting of finite words and
\emph{hyperfinite words}, that is, the set of internal strings
$\w=a_1\cdots a_\nu$ of letters $a_i\in\hA=A$
of finite or hyperfinite length $\ell(\w)=\nu\in\hN$.\footnote
{~In other words, an element $\w\in{}^*W$ is an internal function $\w:\{1,\ldots,\nu\}\to A$
where $\nu\in\hN$, and the length $\ell(\w)=\nu$.}
A hyperfinite word is also called a \emph{hyperword}.
Clearly, $W\subsetneq {}^*W$; in fact, an internal word $\w$ belongs to
$W$ if and only if its length $\ell(\w)\in\N$ is finite.

We will assume that also the variable $v$ is such that ${}^*v=v$, so that the hyper-extension
${}^*(W_{\{v\}}\setminus W)={}^*W_{\{v\}}\setminus{}^*W$ is
the set of \emph{variable hyperwords} on $A$, where
$W_{\{v\}}$ means $W_{A\cup\{v\}}$.
Similarly to the finite case, if $\w(v)$ is a variable hyperword, for every $b\in{}^*W$
(and hence for every $b\in W$) one can consider the hyperword
$\w(b)\in {}^*W$ obtained by replacing each occurrence of $v$ in $\w(v)$ by $b$.\footnote
{~To be precise, this is obtained by considering the hyper-extension of
the ``substitution function" $S:W(v)\times W\to W$ where $(w(v),b)\mapsto w(b)$.}

We will prove the theorem proceeding by induction
on the cardinality $n$ of the set of words $B=\{b_1,\ldots,b_n\}\subset W$.
The base case $n=1$ is trivial.

Let us now consider the inductive step.
For simplicity, and to better illustrate the idea, as an example we first present in detail
the step from $n=3$ to $n=4$, in the case of $5$-colorings of $W$.

Let $B=\{b_1, b_2, b_3, b_4\}\subset W$. By the inductive hypothesis for $n=3$,
the nonstandard characterization given by Proposition \ref{PRcharacterizations}
yields the existence of a variable hyperword $\w(v)\in{}^*W_{\{v\}}$
such that
$\w(b_1)\ueq\w(b_2)\ueq\w(b_3)$.

Given a $5$-coloring of $W=C_1\cup\ldots\cup C_5$,
consider the following 6 elements in the 6-th hyper-extension
${}^{******}W$,
where $\star$ denotes the concatenation of words:

\medskip
\begin{tabular}{lclclclclclcl}
$\xi_0\!\!$ & $\!\!:=\!\!$ & $\!\!\w(b_1)\!\!$ & $\star$ & $\!\!{}^*\w(b_1)\!\!$
& $\star$ &  $\!\!{}^{**}\w(b_1)\!\!$ & $\star$ & $\!\!{}^{***}\w(b_1)\!\!$
& $\star$ &  $\!\!{}^{****}\w(b_1)\!\!$ & $\star$
& $\!\!{}^{*****}\w(b_4)$
\\
$\xi_1\!\!$ & $\!\!:=\!\!$  & $\!\!\w(b_1)\!\!$ & $\star$ & $\!\!{}^*\w(b_1)\!\!$
& $\star$ &  $\!\!{}^{**}\w(b_1)\!\!$ & $\star$ & $\!\!{}^{***}\w(b_1)\!\!$
& $\star$ &  $\!\!{}^{****}\w(b_4)\!\!$ & $\star$
& $\!\!{}^{*****}\w(b_4)$
\\
$\xi_2\!\!$ & $\!\!:=\!\!$ & $\!\!\w(b_1)\!\!$ & $\star$ & $\!\!{}^*\w(b_1)\!\!$
& $\star$ &  $\!\!{}^{**}\w(b_1)\!\!$ & $\star$
& $\!\!{}^{***}\w(b_4)\!\!$
& $\star$ &  $\!\!{}^{****}\w(b_4)\!\!$ & $\star$
& $\!\!{}^{*****}\w(b_4)$
\\
$\xi_3\!\!$ & $\!\!:=\!\!$ & $\!\!\w(b_1)\!\!$ & $\star$ & $\!\!{}^*\w(b_1)\!\!$
& $\star$ &  $\!\!{}^{**}\w(b_4)\!\!$ & $\star$
& $\!\!{}^{***}\w(b_4)\!\!$
& $\star$ &  $\!\!{}^{****}\w(b_4)\!\!$ & $\star$
& $\!\!{}^{*****}\w(b_4)$
\\
$\xi_4\!\!$ & $\!\!:=\!\!$ & $\!\!\w(b_1)\!\!$ & $\star$ & $\!\!{}^*\w(b_4)\!\!$
& $\star$ &  $\!\!{}^{**}\w(b_4)\!\!$ & $\star$ & $\!\!{}^{***}\w(b_4)\!\!$
& $\star$ &  $\!\!{}^{****}\w(b_4)\!\!$ & $\star$ & $\!\!{}^{*****}\w(b_1)$
\\
$\xi_5\!\!$ & $\!\!:=\!\!$ & $\!\!\w(b_4)\!\!$ & $\star$ & $\!\!{}^*\w(b_4)\!\!$
& $\star$ &  $\!\!{}^{**}\w(b_4)\!\!$ & $\star$ & $\!\!{}^{***}\w(b_4)\!\!$
& $\star$ &  $\!\!{}^{****}\w(b_4)\!\!$ & $\star$
& $\!\!{}^{*****}\w(b_4)$
\end{tabular}

\medskip
By the pigeonhole principle, in the coloring
${}^{******}W={}^{******}C_1\cup\ldots\cup{}^{******} C_r$ there must be
two elements of the same color;
by way of example, say $\xi_1,\xi_3\in{}^{******}C_j$.
Since $\w(b_1)\ueq\w(b_2)\ueq\w(b_3)$,
the four elements $\xi_1=\zeta_1\ueq\ldots\ueq\zeta_4=\xi_3$
below are $u$-equivalent to each other,
and hence they all belong to ${}^{******}C_j$:

\medskip
\begin{tabular}{lclclclclclcl}
$\zeta_1\!\!$ & $\!\!:=\!\!$  & $\!\!\w(b_1)\!\!$ & $\star$ & $\!\!{}^*\w(b_1)\!\!$
& $\star$ &  $\!\!{}^{**}\w(b_1)\!\!$ & $\star$ & $\!\!{}^{***}\w(b_1)\!\!$
& $\star$ &  $\!\!{}^{****}\w(b_4)\!\!$ & $\star$
& $\!\!{}^{*****}\w(b_4)$
\\
$\zeta_2\!\!$ & $\!\!:=\!\!$ & $\!\!\w(b_1)\!\!$ & $\star$ & $\!\!{}^*\w(b_1)\!\!$
& $\star$ &  $\!\!{}^{**}\w(b_2)\!\!$ & $\star$
& $\!\!{}^{***}\w(b_2)\!\!$
& $\star$ &  $\!\!{}^{****}\w(b_4)\!\!$ & $\star$
& $\!\!{}^{*****}\w(b_4)$
\\
$\zeta_3\!\!$ & $\!\!:=\!\!$ & $\!\!\w(b_1)\!\!$ & $\star$
& $\!\!{}^*\w(b_1)\!\!$  & $\star$ &  $\!\!{}^{**}\w(b_3)\!\!$
& $\star$ & $\!\!{}^{***}\w(b_3)\!\!$
& $\star$ &  $\!\!{}^{****}\w(b_4)\!\!$ & $\star$
& $\!\!{}^{*****}\w(b_4)$
\\
$\zeta_4\!\!$ & $\!\!:=\!\!$ & $\!\!\w(b_1)\!\!$ & $\star$ & $\!\!{}^*\w(b_1)\!\!$
& $\star$ &  $\!\!{}^{**}\w(b_4)\!\!$ & $\star$
& $\!\!{}^{***}\w(b_4)\!\!$
& $\star$ &  $\!\!{}^{****}\w(b_4)\!\!$ & $\star$
& $\!\!{}^{*****}\w(b_4)$
\end{tabular}

\medskip
Now consider the following variable hyperword
$\widetilde{\w}(v)\in{}^{******}(W_{A\cup\{v\}})$:
$$\widetilde{\w}(v)\ :=\ \w(b_1)\star{}^*\w(b_1)\star
{}^{**}\w(v)\star{}^{***}\w(v)\star{}^{****}\w(b_4)\star{}^{*****}\w(b_4).$$

Then $\widetilde{\w}(b_1)=\zeta_1$, $\widetilde{\w}(b_2)=\zeta_2$,
$\widetilde{\w}(b_3)=\zeta_3$, and $\widetilde{\w}(b_4)=\zeta_4$, and so the set
$\{\widetilde{\w}(b_1), \widetilde{\w}(b_2), \widetilde{\w}(b_3), \widetilde{\w}(b_4)\}\subseteq{}^{******}C_j$
is monochromatic. Finally, by backward \emph{transfer}, we obtain the existence of
a variable word $w(v)\in W_{\{v\}}$ such that set of instances
$\{w(b_1), w(b_2), w(b_3), w(b_4)\}\subseteq C_j$
is monochromatic with respect to the initial $5$-coloring $W=C_1\cup\ldots\cup C_5$.

While the idea should be clear by now, for completeness let us
write down the inductive step from $n$ to $n+1$ in its general form.

Let $B=\{b_1,\ldots,b_n,b_{n+1}\}$.
By the nonstandard characterization of the inductive hypothesis,
there exists a variable hyperword $\w(v)\in{}^*W_{\{v\}}\setminus {}^*W$  such that
$$\w(b_1)\ueq\ldots\ueq\w(b_n).$$
Given a finite coloring of $W=C_1\cup\ldots\cup C_r$,
consider the following elements for $s=0,1,\ldots,r$:\footnote
{~We agree that $\xi_r=\prod_{i=0}^r\s^i(\w(b_{n+1}))$.}
\begin{itemize}
\item
$\xi_s:=\prod_{i=0}^{r-s-1}\s^i(\w(b_1))\star\prod_{i=r-s}^r\s^i(\w(b_{n+1}))\in\s^{r+1}(W)$.
\end{itemize}

\medskip
By the pigeonhole principle, in the coloring
$\s^{r+1}(W)=\s^{r+1}(C_1)\cup\ldots\cup\s^{r+1}(C_r)$ there must be
$s<t$ such that the elements $\xi_s, \xi_t\in\s^{r+1}(C_j)$ have the same color.
Since the elements $\w(b_i)$ are $u$-equivalent to each other for $i=1,\ldots,n$,
also the elements
$\xi_s=\zeta_1\,\ueq\,\ldots\,\ueq\zeta_n\,\ueq\,\zeta_{n+1}=\xi_t$ below are
$u$-equivalent to each other, and hence they all belong to the same color $\s^{r+1}(C_j)$:
\begin{itemize}
\item
$\zeta_k:=\prod_{i=0}^{r-t-1}\s^i(\w(b_1))\star\prod_{i=r-t}^{r-s-1}\s^i(\w(b_k))\star
\prod_{i=r-s}^r\s^i(\w(b_{n+1}))\in\s^{r+1}(W)$
\end{itemize}

Now consider the following variable hyperword
 $\widetilde{\w}(v)\in\s^{r+1}(W_{A\cup\{v\}})$:
\begin{itemize}
\item
$\widetilde{\w}(v):=\prod_{i=0}^{r-t-1}\s^i(\w(b_1))\star\prod_{i=r-t}^{r-s-1}\s^i(\w(v))\star
\prod_{i=r-s}^r\s^i(\w(b_{n+1}))$

$\in\s^{r+1}(W_{\{v\}})$.
\end{itemize}

Then $\widetilde{\w}(b_k)=\zeta_k$ for $k=1,\ldots,n,n+1$ and so the set
$$\{\widetilde{\w}(b_1),  \ldots, \widetilde{\w}(b_n), \widetilde{\w}(b_{n+1})\}\subseteq\s^{r+1}(C_j)$$
is monochromatic. Finally, by backward \emph{transfer}, we obtain the existence of
a variable word $w(v)\in W_{\{v\}}$ such that set of instances
$\{w(b_i)\mid i=1,\ldots,n,n+1\}\subseteq C_j$
is monochromatic with respect to the initial coloring $W=C_1\cup\ldots\cup C_r$.
\end{proof}

Below we present a translation of the above nonstandard arguments into the
language of ultrafilters. It is worth noting that the resulting ultrafilter proof,
unlike existing ones, relies exclusively on non-principal ultrafilters
and uses neither minimal nor idempotent ultrafilters.

\begin{proof}[Ultrafilter proof of Theorem \ref{HJstrong}]
We proceed by induction on the cardinality $n$ of the set of words $B=\{b_1,\ldots,b_n\}\subset W$.
The base case $n=1$ is trivial.
Now assume that Theorem \ref{HJstrong} is true for $B=\{b_1,\ldots,b_n\}\subseteq W$;
we want to show that the theorem is also true for $B'=\{b_1,\ldots,b_n,b_{n+1}\}\subseteq W$.

For every $A\subseteq W$ let
$\Lambda(A):=\{w(x)\in W(x)\mid w[B]\subseteq A\ \text{or}\ w[B]\cap A=\emptyset\}$.
The family $\F:=\{\Lambda(A)\mid A\subseteq W\}$ has the finite intersection property.
Indeed, given $A_1,\ldots,A_k\subseteq W$, pick a finite partition
$W=D_1\cup\ldots\cup D_s$ such that $A_i\subseteq D_j$ or $A_i\cap D_j=\emptyset$ for
every $i=1,\ldots,k$ and for every $j=1,\ldots,s$.
By the hypothesis there exists a piece $D_j$ and a variable word $w(v)$ such that
$w[B]\subseteq D_j$; then clearly $w(v)\in\bigcap_{i=1}^k\Lambda(A_k)\ne\emptyset$.

Now pick an ultrafilter $\W$ on $W(v)$ that extends the family $\F$.
For $b\in W$ let $S_b:W(v)\to W$ be the ``substitution" function $S_b:w(x)\mapsto w(b)$.
We observe that all image ultrafilters $S_{b_i}(\W)=S_{b_j}(\W)$
are equal for $1\le i,j\le n$. Indeed, if $A\in S_{b_i}(\W)$, that is, if $S_{b_i}^{-1}(A)\in\W$,
also then $A\in S_{b_j}(\W)$, since
$S_{b_j}^{-1}(A)\supseteq\{w(x)\in W(x)\mid w[B]\subseteq A\}=
S_{b_i}^{-1}(A)\cap\Lambda(A)\in\W$.

Let $\U:=S_{b_1}(\W)=\ldots=S_{b_n}(\W)$ and let $\V:=S_{b_{n+1}}(\W)$.
Given a finite coloring $W=C_1\cup\ldots\cup C_r$, consider the following $r+1$ ultrafilters:
\begin{itemize}
\item
$\X_s:=\U^{s\ostar}\ostar\V^{(r+1-s)\ostar}$ for $s=0,1,\ldots,r$.\footnote
{~We agree that $\X_0=\V^{(r+1)\ostar}$.}
\end{itemize}
For every $s$ there exists $i_s$ such that $C_{i_s}\in\X_s$ and so, by the pigeonhole principle,
there exist $0\le s<t\le r$ and a color $C=C_i\in\X_s\cap\X_t$.
Note that $\X_s=\U'\ostar\V^{k\ostar}\ostar\V'$ and $\X_t=\U'\ostar\U^{k\ostar}\ostar\V'$
where $\U':=\U^{s\ostar}$; $k=t-s$, and $\V'=\V^{(r+1-t)\ostar}$.
Now let $D:=\{w\in W\mid\{w'\in W\mid w\star w'\in C\}\in\V'\}$.
Since $C\in\X_s$ we have that $D\in\U'\ostar\V^{k\ostar}$; and since
$C\in\X_t$ we have that $D\in\U'\ostar\U^{k\ostar}$.
Then $D_1:=\{w\in W\mid\{w'\in W\mid w\star w'\in D\}\in\V^{k\ostar}\}\in\U'$
and $D_2:=\{w\in W\mid\{w'\in W\mid w\star w'\in D\}\in\U^{k\ostar}\}\in\U'$.
Now pick any word $\overline{w}\in D_1\cap D_2\in\U'$.
Then $E:=\{w'\in W\mid \overline{w}\star w'\in D\}\in\U^{k\ostar}\cap\V^{k\ostar}$.

For $b\in W$ let $\sigma_b:[W(v)]^k\to\W$ be the function where
$\sigma_b:(w_1(v),\ldots,w_k(v))\mapsto w_1(b)\star\ldots\star w_k(b)$.
For $i=1,\ldots,n$, we observe that $E\in\U^{k\ostar}=S_{b_i}(\W)^{k\ostar}=\sigma_{b_i}(\W^{k\otimes})$
where $\W^{k\otimes}=\W\otimes\ldots\otimes\W$ is the $k$-th iterated tensor product of $\W$ with itself;
and similarly, $E\in\V^{k\ostar}=S_{b_{n+1}}(\W)^{k\ostar}=\sigma_{b_{n+1}}(\W^{k\otimes})$.
Then we can pick a $k$-tuple of variable words
$(u_1(x),\ldots,u_k(x))\in\bigcap_{i=1}^{n+1}\sigma_{b_i}^{-1}(E)\in\W^{k\otimes}$.
This means that for every $i=1,\ldots,n+1$ one has
$u_1(b_i)\star\ldots\star u_k(b_i)\in E$, and hence
$w_i:=\overline{w}\star u_1(b_i)\star\ldots\star u_k(b_i)\in D$.
Then the sets $G_i:=\{w'\in W\mid w_i\star w'\in C\}\in\V'$ for all $i=1,\ldots,n+1$,
and so we can pick a word $w'\in\bigcap_{i=1}^{n+1}G_i\in\V'$.
Finally, if we let $\widetilde{w}(x):=\overline{w}\star u_1(x)\star\ldots\star u_k(x)\star w'$
then $\widetilde{w}(b_i)\in C$ for every $i=1,\ldots,n+1$.

\end{proof}

\section{The space of Hales-Jewett witnesses}

\begin{definition}
An ultrafilter $\U$ on $W$ is a \emph{Hales-Jewett witness}, or simply a \emph{HJ-witness},
if for every $X\in\U$ and for all $b_1,\ldots,b_n\in W$ there exists
a variable word $w(v)$ such that all instances $w(b_1),\ldots,w(b_n)\in X$.
\end{definition}

To prove our main result
we will need a ``special" ultrafilter, namely a HJ-witness that is idempotent
with respect to an appropriate operation.

Let us briefly recall a few algebraic notions and facts.
All details and proofs can be found in \cite[Chapter 4]{hs}.

Let $(S,\star)$ be a semigroup. A nonempty set $L\subseteq S$ is a \emph{left ideal}
if for every $a\in S$ and for every $b\in L$ one has $a\star b\in L$.
The notion of \emph{right ideal} is defined similarly.
A \emph{bilateral} ideal is a left ideal that is also a right ideal.
Note that if $I$ is a left or a right ideal then $(I,\star)$ is a sub-semigroup of $(S,\star)$.

A semigroup $(S,\star)$ where $S$ is a topological space is named \emph{right-topological}
if for every $b\in S$, the function $\psi_b:a\mapsto a\star b$ is continuous.
A fundamental result in this area is the following.
\begin{itemize}
\item
\emph{\textbf{Ellis' Lemma}: In every compact right-topological semigroup $(S,\star)$ there exist
idempotent elements $a\star a=a$.}
\end{itemize}

Every semigroup $(S,\star)$ is canonically extended to a compact right-topological
semigroup $(\beta S,\ostar)$ on the corresponding space of ultrafilters,
where the operation $\ostar$ is defined by setting
for all $\U,\V\in\beta S$ and for every $X\subseteq S$:
$$X\in\U\ostar\V\Longleftrightarrow
\{s\in S\mid\{t\in S\mid s\star t\in X\}\in\V\}\in\U\}.$$

The following properties hold:
\begin{itemize}
\item
\emph{Every left ideal of $(\beta S,\ostar)$ includes a minimal left ideal with respect to inclusion.}
\item
\emph{Every minimal left ideal $L$ of $(\beta S,\ostar)$ is closed,
and hence $(L,\ostar)$ is a compact right-topological (sub)semigroup.}
\item
\emph{The union of all minimal left ideals of $(\beta S,\ostar)$
forms a bilateral ideal denoted $K(\beta S,\ostar)$, and
named the ``smallest ideal" because it is included in every other bilateral ideal.}
\end{itemize}

Ultrafilters in $K(\beta S,\ostar)$ are called \emph{minimal ultrafilters},
and satisfy relevant combinatorial properties; for example,
if $\U$ is a minimal ultrafilter of $(\beta\N,\oplus)$,
then every set $A\in\U$ contains arbitrarily
large arithmetic progressions. Combining the above results, one obtains
the following property:
\begin{itemize}
\item[$(\dagger)$]
\emph{For every left ideal $L$ of $(\beta S,\ostar)$ there exists
an ultrafilter $\U\in L$ that is minimal and idempotent: $\U\ostar\U=\U$.}
\end{itemize}

Let $W=W_A$ be the set of words over an alphabet $A$.
We already observed that the concatenation of words
$w\star w'$ is an associative operation, and
hence $(W,\star)$ is a (non-abelian) semigroup.
Our desired HJ-witness will be guaranteed by
the algebraic properties of the right-topological semigroup $(\beta W,\ostar)$.

\begin{proposition}\label{bilateral}
The space $\HJ=\{\U\in\beta W\mid \U\ \text{is a HJ-witness}\}$ is a
nonempty closed bilateral ideal of $(\beta W,\ostar)$, and so
the smallest ideal $K(\beta W,\ostar)\subseteq\HJ$.
In particular, there exist idempotent ultrafilters $\U=\U\ostar\U\in\HJ$
that are minimal in $(\beta W,\ostar)$.
\end{proposition}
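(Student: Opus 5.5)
We will check four things in turn: $\HJ$ is nonempty, it is closed, it is a left ideal, and it is a right ideal. The final claims then follow from the general facts recalled above. Since $K(\beta W,\ostar)$ is contained in every bilateral ideal, we get $K(\beta W,\ostar)\subseteq\HJ$. Moreover $\HJ$ contains a minimal left ideal $L$, and applying $(\dagger)$ to $L$ yields a minimal idempotent $\U\in L\subseteq\HJ$.

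\emph{Nonemptiness.} Fix a finite $B\subset W$ and a set $X$. Say $X$ is \emph{$B$-large} if some variable word $w(v)$ satisfies $w[B]\subseteq X$. By Theorem \ref{HJstrong}, for every finite $B$ the family of $B$-large sets is partition regular. We want one ultrafilter that works for all $B$ simultaneously. The property ``$X$ is not $B$-large'' is inherited by subsets, and the union of a non-$B$-large set and a non-$B'$-large set is not $(B\cup B')$-large, by Theorem \ref{HJstrong} applied to $B\cup B'$ and the 2-coloring $\{X_1,X_2\}$. Hence the family
$$\mathcal{I}=\{X\subseteq W\mid X \text{ is not } B\text{-large for some finite } B\}$$
is closed under finite unions: if $X_1$ is not $B_1$-large and $X_2$ is not $B_2$-large, then both are not $(B_1\cup B_2)$-large, so $X_1\cup X_2$ is not $(B_1\cup B_2)$-large. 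Also $\mathcal{I}$ is an ideal not containing $W$. Consequently the complements $\{W\setminus X\mid X\in\mathcal{I}\}$ form a filter, which we extend to an ultrafilter $\U$. This $\U$ contains no member of $\mathcal{I}$, and so $\U$ is an HJ-witness.

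\emph{Closedness.} Suppose $\U\notin\HJ$. Then there are $X\in\U$ and a finite $B$ such that no $w(v)$ satisfies $w[B]\subseteq X$. Every $\V$ in the basic open set $\{\V\mid X\in\V\}$ then fails to be an HJ-witness, so the complement of $\HJ$ is open.

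\emph{Ideal property.} This is the only step requiring care, because concatenation is non-commutative and there are two sides to handle. Let $\U\in\HJ$, $\V\in\beta W$, and $X\in\V\ostar\U$.

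For the left ideal property we must show $\V\ostar\U\in\HJ$. The set $\{s\mid s^{-1}X\in\U\}$ belongs to $\V$, where $s^{-1}X=\{t\mid s\star t\in X\}$, so we may pick some $s$ in it. Since $s^{-1}X\in\U$ and $\U$ is an HJ-witness, there is $w(v)$ with $w(b_i)\in s^{-1}X$ for all $i$. Then the variable word $s\star w(v)$ has all its instances $s\star w(b_i)$ in $X$.

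For the right ideal property we must show $\U\ostar\V\in\HJ$. Now $Y=\{s\mid s^{-1}X\in\V\}$ belongs to $\U$, so there is $w(v)$ with $w(b_i)\in Y$ for all $i$. The finitely many sets $w(b_i)^{-1}X$ all lie in $\V$, so their intersection lies in $\V$ and we can pick $t$ in it. Then $w(v)\star t$ is the required variable word. The point is that finitely many $\V$-large sets intersect, so a single common suffix $t$ works for every instance.
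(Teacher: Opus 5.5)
Your closedness argument, both ideal inclusions, and the final deduction of $K(\beta W,\ostar)\subseteq\HJ$ and of minimal idempotents in $\HJ$ are correct. The two ideal inclusions coincide with the two cases of the paper's proof: a common suffix $t$ taken from finitely many $\V$-sets, and a prefix $s$ taken from a single $\V$-set.

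The gap is in the nonemptiness step. You claim that the union of a non-$B$-large set and a non-$B'$-large set is not $(B\cup B')$-large, and this is false. For distinct words $b\neq b'$, take $B=B'=\{b,b'\}$, $X_1=\{b\}$, $X_2=\{b'\}$. Neither singleton is $B$-large, since $w(b)\neq w(b')$ for every variable word $w(v)$. Yet $X_1\cup X_2=w[B]$ for $w(v)=v$. The appeal to Theorem \ref{HJstrong} breaks down because $\{X_1,X_2\}$ is a coloring of $W$ only if $X_1\cup X_2=W$; when the union is a proper subset, the theorem says nothing about it. So your derivation that $\mathcal{I}$ is closed under finite unions is invalid. (That closure property does hold, but proving it needs more than you give, e.g.\ a finitary form of Theorem \ref{HJstrong} obtained by compactness, combined with substituting variable words into a variable word.)

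The repair is easy, because you never need $\mathcal{I}$ to be an ideal. To extend $\{W\setminus X\mid X\in\mathcal{I}\}$ to an ultrafilter, it suffices that this family has the finite intersection property. That is, you need $W\neq X_1\cup\dots\cup X_m$ whenever each $X_i$ is not $B_i$-large. This is exactly what Theorem \ref{HJstrong} delivers. Color each $x\in W$ by the least $i$ with $x\in X_i$, and apply the theorem to $B=B_1\cup\dots\cup B_m$. Some color class, hence some $X_i$, is $B$-large and therefore $B_i$-large, a contradiction.

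With this change your route is a correct, more hands-on alternative to the paper's. The paper gets each $\HJ_B$ nonempty from Proposition \ref{PRcharacterizations}, notes $\HJ_{B_1}\cap\dots\cap\HJ_{B_k}\supseteq\HJ_{B_1\cup\dots\cup B_k}$, and concludes by compactness of $\beta W$. It obtains closedness of $\HJ$ for free as an intersection of closed sets. Your version avoids the witness characterization and the topology of $\beta W$, at the cost of the separate (correct) closedness argument you supplied.
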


\begin{proof}
By the strong version of Hales-Jewett Theorem we know that
for every finite set $B\subset W$, the family
$\F_B:=\left\{\{w(b)\mid b\in B\}\mid w(v)\in W_{\{v\}}\right\}$
is partition regular on $W$. So, by the characterization of
\ref{PRcharacterizations}, the following space of ultrafilters is nonempty:
$$\HJ_B:=\{\U\in\beta W\mid \forall X\in\U\ \exists w(v)\in W_{\{v\}}\ \text{such that}\
\{w(b)\mid b\in B\}\subseteq X\}.$$

It is easily verified that the subspaces $\HJ_B\subseteq\beta W$ are closed.
Now observe that for all $B_1,\ldots,B_k\subset W$, the intersection
$\HJ_{B_1}\cap\ldots\cap\HJ_{B_k}\supseteq\HJ_B\ne\emptyset$ where
$B:=B_1\cup\ldots\cup B_k$.
So, by compactness of $\beta W$, it follows that the whole intersection
$\HJ=\bigcap\{\HJ_B\mid B\subset W\ \text{finite}\}\ne\emptyset$ is closed and nonempty.

We have to verify that $\HJ$ is a bilateral ideal of $(\beta W,\ostar)$.
Fix $b_1,\ldots,b_n\in W$. Let $X\in\U\ostar\V$, that is,
$$\Lambda:=\{w\in W\mid \{w'\in W\mid w\star w'\in X\}\in\V\}\in\U.$$

Assume first that $\U\in\HJ$.
Then there exists a variable word $w(x)$
such that for every $i=1,\ldots,n$, one has $w(b_i)\in\Lambda$, and hence
$\Gamma(b_i):=\{w'\in W\mid w(b_i)\star w'\}\in\V$.
Pick any word $w'\in\bigcap_{i=1}^n\Gamma(b_i)\in\V$, and let
$\widetilde{w}(v):=w(x)\star w'\in W(w)$. Then all instances
$\widetilde{w}(b_i)=w(b_i)\star w'\in X$. This shows that $\U\ostar\V\in\HJ$.

Now assume that $\V\in\HJ$. Pick any word $b\in\Lambda$.
Then $\Gamma(b):=\{w'\in W\mid b\star w'\in X\}\in\V$
and so there exists a variable word $w(x)$ such that $w(b_i)\in \Gamma(b)$
for all $i=1,\ldots,n$. If $\widetilde{w}(x):=b\star w(v)\in W_{\{v\}}$,
then all instances $\widetilde{w}(b_i)=b\star w(b_i)\in X$, and also in
this case we can conclude that $\U\ostar\V\in\HJ$.
\end{proof}

\begin{remark}
Note that a minimal ultrafilter of the compact right-topological subsemigroup $(\HJ,\ostar)$
may not be minimal in $(\beta W,\ostar)$, and so the last part of the previous theorem
requires the fact that $\HJ$ is a closed left ideal (actually bilateral),
not just a closed subsemigroup.
\end{remark}

We will use the nonstandard counterpart of the existence of HJ-witnesses that
are idempotent. (We do not need minimality.)

\begin{lemma}\label{lemma}
There exists a variable hyperword $\w(x)\in{}^*W_{\{v\}}\setminus W_{\{v\}}$ such that:
\begin{enumerate}
\item
The instances $\{\w(b)\mid b\in W\}$ are $u$-equivalent to each other.
\item
$\w(b)\ueq\w(b')\star{}^*\w(b'')$ for all $b,b',b''\in W$.
\end{enumerate}
\end{lemma}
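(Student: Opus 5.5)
We obtain the lemma from Proposition \ref{bilateral}, which provides an idempotent HJ-witness $\U=\U\ostar\U\in\HJ$. The plan has three steps: realize $\U$ as a common ultrafilter $\UU_{\w(b)}$ of all instances of a single variable hyperword, then transfer the idempotency $\U\ostar\U=\U$ to the nonstandard level via the corollary on $u$-equivalence, and finally check non-standardness.

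\emph{Step 1: a common generator for all instances.} For $X\in\U$ and a finite set $B\subset W$, let
$$\Gamma(X,B):=\{w(v)\in W_{\{v\}}\mid w(b)\in X\ \text{for all}\ b\in B\}.$$
Because $\U$ is an HJ-witness, each $\Gamma(X,B)$ is nonempty. The family of all such sets has the finite intersection property, since
$$\Gamma(X_1,B_1)\cap\cdots\cap\Gamma(X_k,B_k)\supseteq\Gamma(X_1\cap\cdots\cap X_k,\,B_1\cup\cdots\cup B_k),$$
and $X_1\cap\cdots\cap X_k\in\U$.

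To exclude standard variable words, add the cofinite sets $\{w(v)\mid \ell(w(v))>m\}$ for $m\in\N$. The finite intersection property survives, because $\U$ is non-principal and hence contains only infinite sets. Indeed, it is non-principal because an HJ-witness cannot contain a singleton $\{c\}$ once we take $B=\{b,b'\}$ with $b\ne b'$ of different lengths. We may then pick $X\in\U$ avoiding all words of length $\le m$, and any $w(v)$ with $w[B]\subseteq X$ with some $b\in B$ nonempty must have length $>m$ (or $>m/\ell(b)$; it suffices to choose $X$ appropriately).

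The family has cardinality at most $\mathfrak{c}$, so $\mathfrak{c}$-enlargement gives $\w(v)\in{}^*W_{\{v\}}\setminus W_{\{v\}}$ lying in ${}^*\Gamma(X,B)$ for all $X\in\U$ and all finite $B$. In particular, $\w(b)\in{}^*X$ for every $b\in W$ and every $X\in\U$. Since $\U$ is an ultrafilter, this means $\UU_{\w(b)}=\U$ for every $b\in W$, which proves (1).

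\emph{Step 2: transferring idempotency.} By the remark on arbitrary semigroups (equivalently, part (3) of the corollary) applied to $(W,\star)$,
$$\UU_{\w(b')\star{}^*\w(b'')}=\UU_{\w(b')}\ostar\UU_{\w(b'')}=\U\ostar\U=\U=\UU_{\w(b)},$$
which gives (2).

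\emph{Main obstacle.} The delicate point is formal. Part (3) of the corollary is stated for $\alpha_i\in{}^*S\setminus S$, so we must ensure that the instances $\w(b)$ are nonstandard hyperwords. This follows because $\UU_{\w(b)}=\U$ is non-principal: were $\w(b)\in W$, its generated ultrafilter would be principal. In fact, the underlying identity $\UU_\alpha\ostar\UU_\beta=\UU_{\alpha\star{}^*\beta}$ holds for all $\alpha,\beta\in{}^*S$, so this point is harmless in any case.
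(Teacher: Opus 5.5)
Your proof is correct and follows essentially the same route as the paper: take an idempotent $\U\in\HJ$, apply enlargement to the sets $\{w(v)\mid w(b)\in X\}$ (your $\Gamma(X,B)$ are finite intersections of these), and conclude with $\UU_{\w(b')\star{}^*\w(b'')}=\U\ostar\U=\U$. The one difference is how you get $\w(v)\notin W_{\{v\}}$. You add length constraints to the enlargement family, and your justification (``pick $X\in\U$ avoiding all words of length $\le m$'') tacitly uses that $A$ is finite. The paper, and your own final paragraph, get this for free from non-principality of $\U$: a standard $\w(v)$ would make every $\UU_{\w(b)}$ principal. So that extra step can simply be dropped.
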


\begin{proof}
By the previous Proposition \ref{bilateral}
we can pick an idempotent ultrafilter $\U=\U\ostar\U\in\HJ$.
For every $b\in W$ and for every $X\in\U$, let
$$\Theta(b,X):=\{w(v)\in W_{\{v\}}\mid w(b)\in X\}.$$

We observe that the family $\{\Theta(b,X)\mid b\in W,\, X\in\U\}$ has the finite intersection property.
Indeed, let $b_1,\ldots,b_n\in W$ and $X_1,\ldots,X_n\in\U$ be given.
Since $\U\in\HJ$ and $Y:=X_1\cap\ldots\cap X_n\in\U$, there exists a variable word $w(v)$
such that $w(b_i)\in Y$ for every $i=1,\ldots,n$.
Clearly, such a variable word $w(v)\in\bigcap_{i=1}^n\Theta(b_i,X_i)$.

By the \emph{enlargement property}, the intersection of the hyper-extensions
$$\bigcap_{b\in W,\, X\in\U}{}^*\Theta(b,X)\ne\emptyset.$$

Pick a variable hyperword $\w(v)\in{}^*W_{\{v\}}$ in that intersection;
this means that for every $b\in W$ and for every $X\in\U$ one has $\w(b)\in{}^*X$.
Then the elements in $\{\w(b)\mid b\in W\}$ are $u$-equivalent to each other, since they all
generate $\UU_{\w(b)}=\U$ the same ultrafilter.
Note that $\w(v)\notin W_{\{v\}}$, otherwise every $\w(b)\in W$;
this is not possible because if $b\ne b'$ then $\w(b)\ne\w(b')$,
and two different words of $W$ cannot be $u$-equivalent.\footnote
{~In fact, it is readily verified that no idempotent ultrafilter $\U=\U\ostar\U$ on $W$ is principal.}
Finally, since $\U$ is idempotent, for all $b,b',b''\in W$ we have
$\UU_{\w(b)}=\U=\U\ostar\U=\U_{\w(b')}\ostar\U_{\w(b'')}=\UU_{\w(b')\star{}^*\w(b'')}$,
and hence $\w(b)\ueq\w(b')\star{}^*\w(b'')$.
\end{proof}

\section{The main result}

We are finally ready to prove our main result, a multidimensional
generalization of Hales-Jewett Theorem that simultaneously extends
in a ``direct" way also Ramsey and Hindman's Theorems. Let us first fix notation.

If $w_1,\ldots,w_n\in W$, we denote their product
$w_1\star w_2\star\cdots\star w_n=\prod_{i=1}^n w_i$. In general, if
$F=\{n_1<n_2<\cdots<n_s\}$ is a finite ordered set of indexes, then
$\prod_{i\in F}w_i:=w_{n_1}\star w_{n_2}\star\cdots\star w_{n_s}$.

If $F,G\subset\N$ are finite nonempty sets, we write $F<G$ to mean $\max F<\min G$.

\begin{theorem}\label{RHHJ}
Let $W=W_A$ be the set of words over an alphabet $A$,
let $(B_n)_{n\in\N}$ be a sequence of finite subsets of $W$, and let $k\in\N$.
For every finite coloring of $[W]^k$ there exists a sequence of variable words
$(w_n(v))_{n\in\N}$ of increasing lengths such that
for every sequence of words $(b_n)_{n\in\N}$ where $b_n\in B_n$
and for all nonempty finite subsets $F_1<F_2<\cdots<F_k$ of $\N$,
the following $k$-tuples are monochromatic:
$$\left\{\prod_{i\in F_1}w_i(b_i),
\prod_{i\in F_2}w_i(b_i),\,\ldots\,,
\prod_{i\in F_k}w_i(b_i)\right\}.$$
\end{theorem}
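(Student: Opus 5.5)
We will combine Lemma \ref{lemma} with the standard Milliken--Taylor style construction, carried out in the nonstandard language of iterated hyper-extensions. Fix the variable hyperword $\w(v)\in{}^*W_{\{v\}}\setminus W_{\{v\}}$ given by Lemma \ref{lemma}, and let $\U=\UU_{\w(b)}$ be the common idempotent ultrafilter (for any $b\in W$). The key nonstandard observation is that, by Lemma \ref{lemma} together with the corollary to Proposition \ref{hyper-extensions}, for every $m$ and every choice of words $c_0,\ldots,c_m\in W$ one has
$$\w(c_0)\star\s(\w(c_1))\star\cdots\star\s^m(\w(c_m))\ \ueq\ \w(b)$$
for any $b$, since the left side generates $\U^{(m+1)\ostar}=\U$. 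More generally, for $k$ blocks placed at increasing levels,
$$\left(\prod_{j\in G_1}\s^{j}(\w(c_j)),\ \ldots,\ \prod_{j\in G_k}\s^{j}(\w(c_j))\right),\qquad G_1<\cdots<G_k,$$
generates the tensor power $\U^{k\otimes}$, independently of the $G_i$ and of the $c_j$. Consequently, given the coloring $[W]^k=C_1\cup\cdots\cup C_r$ (viewed as a coloring of increasing $k$-tuples by lengths, or simply of $k$-tuples with distinct entries), there is a single color $C$ with $\{(x_1,\ldots,x_k)\}\in C$ for $\U^{k\otimes}$-almost all tuples; all the $k$-tuples above lie in ${}^{\ldots}C$.

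To pass back to standard variable words I will not try to backward-transfer an infinite sequence directly; instead I translate into ultrafilter language and run the usual inductive construction. Let $\mathcal{C}=\{(x_1,\ldots,x_k)\in W^k : \{x_1,\ldots,x_k\}\in C\}\in\U^{k\otimes}$. Using the standard ``star-set'' trick for idempotents, define for $j<k$ the sets $\mathcal{C}_j\subseteq W^{j}$ of tuples whose fiber is in $\U^{(k-j)\otimes}$, and for $Y\in\U$ the set $Y^\star=\{x\in Y: x^{-1}Y\in\U\}\in\U$ with $x^{-1}(Y^\star)\supseteq$ a member of $\U$. I will build $w_0(v),w_1(v),\ldots$ inductively so that, for every finite $F$ below the current stage, every choice $b_i\in B_i$, and every admissible partial configuration of blocks $F_1<\cdots<F_j$ with $F_j$ possibly still ``open'', the relevant concatenations lie in the appropriate $\U$-large sets. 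At stage $n$ there are only finitely many such conditions (finitely many $F\subseteq\{0,\ldots,n-1\}$, finitely many choices from the finite $B_i$'s), giving a set $Y_n\in\U$; since $\U\in\HJ$, applied to the finite set $B_n$, we get a variable word $w_n(v)$ with $w_n(b)\in Y_n$ for all $b\in B_n$, and we may also demand length larger than $\ell(w_{n-1})$ because $\U$ is non-principal (the set of long words is in $\U$). The idempotence $\U=\U\ostar\U$ is exactly what makes the sets $x^{-1}Y^\star$ again $\U$-large, so that extending an open block $F_j$ by $w_n(b_n)$ keeps us inside the good sets (Hindman part), while the tensor structure $\U^{k\otimes}$ handles starting a new block (Ramsey part).

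The main obstacle is bookkeeping: the conditions must be formulated simultaneously for all $b_i\in B_i$ and for all ways of splitting already-chosen indices into closed blocks, one open block, and discarded indices, while remembering that product of words is non-commutative (so one must use left translates $x^{-1}Y=\{y: x\star y\in Y\}$ consistently with the definition of $\ostar$). I would first handle $k=1$ (a Hindman/Hales--Jewett theorem via $Y^\star$), then do induction on $k$ by passing to the fibers $\mathcal{C}_{j}$, which is the approach of the usual ultrafilter proof of Milliken--Taylor; the nonstandard description above serves as the guide that all these sets are indeed in the right ultrafilters.
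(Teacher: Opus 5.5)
Your proposal is correct and is essentially the paper's proof written in ultrafilter language. The paper runs the same recursion from the idempotent HJ-witness of Lemma~\ref{lemma}: its invariant $(\dagger)_n$ is exactly your condition $(w_{F_1},\ldots,w_{F_h})\in\mathcal{C}_h$, its $(\ddagger)_n$ is your star-set condition $\{y\in W\mid (w_{F_1},\ldots,w_{F_{h-1}},w_{F_h}\star y)\in\mathcal{C}_h\}\in\U$, and at each stage it backward-transfers finitely many properties of $x(v)=\w(v)$, which plays the role of your application of the HJ-witness property to $Y_n$ (so the nonstandard route never transfers an infinite sequence either).

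One detail needs repair. $\U$-largeness of long words only makes the instances $w_n(b)$ long, not $w_n(v)$ itself; take $w_n(v)=v$ with a long $b\in B_n$. The fix is to require $\ell(w_n(b))>\ell(w_{n-1}(v))\cdot\max_{b'\in B_n}\max(1,\ell(b'))$ and then use $\ell(w(b))\le\ell(w(v))\cdot\max(1,\ell(b))$. The paper avoids the issue by imposing the length condition directly on the infinite-length hyperword $\w(v)$.
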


Before proving the theorem, we observe that it actually ``directly" generalizes
strong Hales-Jewett, Ramsey, and Hindman's Theorems.

\begin{remark}\label{maintheoremimpliesHJ}
Given a finite set $B\subset W$,
apply the theorem above where $B_n:=B$ for all $n$ and where $k=1$.
By identifying $[W]^1$ with $W$, we obtain the following property:
\begin{itemize}
\item
\emph{For every finite coloring of $W=C_1\cup\ldots\cup C_r$ there exists a sequence of
variable words $(w_n(v))_{n\in\N}$ of increasing lengths such that for every sequence
$(b_n)_{n\in\N}$ in $B$ and for every nonempty finite subset $F_1\subset\N$,
all elements $\prod_{i\in F_1}w_i(b_i)\in C_j$ are monochromatic.}
\end{itemize}

Given $b\in B$, if we take a sequence $(b_n)_{n\in\N}$ where $b_1=b$
and we let $F_1=\{1\}$, we obtain that $w_1(b)\in C_j$.
So, the variable word $w(v):=w_1(v)$ is such that
the set of instances $\{w(b)\mid b\in B\}$ is monochromatic.
This shows that the strong version of Hales-Jewett Theorem
is a particular case of Theorem \ref{RHHJ}.
\end{remark}

\begin{remark}\label{maintheoremimpliesMT}
As already observed in Remark \ref{singleletter},
if the alphabet $A=\{a\}$ contains a single symbol, then
every word $w$ is uniquely determined by its length $\ell(w)$, and so
the set of words $W=W_A$ can be identified with $\N$
by means of the bijection $w\mapsto\ell(w)$.
In this case, we obtain this property:
\begin{itemize}
\item
\emph{For every finite coloring of $[\N]^k$ there exists a sequence
of variable words $(w_n(v))_{n\in\N}$ of increasing lengths $\ell_n:=\ell(w_n(v))$
such that for all nonempty finite subsets $F_1<\ldots<F_k$ of $\N$,
the following $k$-tuples are monochromatic:}
\begin{multline*}
\left\{\ell\!\left(\prod_{i\in F_1}w_i(a)\right),\,\ldots\,,\ell\!\left(\prod_{i\in F_k}w_i(a)\right)\right\}=
\\
=\left\{\sum_{i\in F_1}\ell(w_i(a))\,,\ldots\,,\sum_{i\in F_k}\ell(w_i(a))\right\}=
\left\{\sum_{i\in F_1}\ell_i,\,\ldots\,,\sum_{i\in F_k}\ell_i\right\}.
\end{multline*}
\end{itemize}

Note that this is precisely the property of Milliken-Taylor Theorem, where the
desired increasing sequence is $(\ell_n)_{n\in\N}$.
So, Milliken-Taylor Theorem, and hence both Ramsey and Hindman's Theorems,
are particular cases of Theorem \ref{RHHJ}.
\end{remark}

\begin{proof}[Proof of Theorem \ref{RHHJ}]
By Lemma \ref{lemma},
we can pick a variable hyperword $\w(v)\in{}^*W_{\{v\}}\setminus W_{\{v\}}$ such that
all instances $\w(b)$ for $b\in W$ are $u$-equivalent to each other
and idempotent, and so $\w(b)\ueq\w(b')\star\s(\w(b''))$ for all $b,b',b''\in W$.

Now let $[W]^k=C_1\cup\ldots\cup C_r$ be a finite coloring.
By the hypothesis on $\w(v)$, it follows that
the $k$-tuples $\{\w(b),\s(\w(b)),\ldots,\s^{k-1}(\w(b))\}$ for $b\in W$
are $u$-equivalent to each other, and so there exists a color $C=C_j$ such that
all $k$-tuples $\{\w(b),\s(\w(b)),\ldots,\s^{k-1}(\w(b))\}\in\s^k(C)$.
Fix any word $\overline{b}\in W$.
By recursion on $n$ we define variable words $w_n(v)$
of increasing lengths in such a way that
for all nonempty subsets $F_1<\ldots<F_h$ of $\{1,\ldots,n\}$
where $1\le h\le k$, and for all $b_1,\ldots,b_n$ where $b_i\in B_i$,
the following conditions are satisfied,
where for simplicity we denote $w_{F_s}=\prod_{i\in F_s}w_i(b_i)$:\footnote
{~When $h=k$, we convene that $(\dagger)_n$ means
$\{w_{\beta_1},\ldots,w_{\beta_k}\}\in\s^{0}(C)=C$; and when
$h=1$, that $(\ddagger)_n$ means
$\{w_{\beta_1}\star\,\w(\overline{b}),\s(\w(\overline{b})),\ldots,\s^{(k-1)}(\w(\overline{b}))\}\in\s^k(C)$.}
\begin{enumerate}
\item[$(\dagger)_n$]
$\{w_{F_1},\ldots,w_{F_h},\w(\overline{b}),\s(\w(\overline{b})),\ldots,\s^{(k-h-1)}(\w(\overline{b}))\}\in\s^{k-h}(C)$.
\item[$(\ddagger)_n$]
$\{w_{F_1},\ldots,w_{F_{h-1}},
w_{F_h}\star\,\w(\overline{b}),\s(\w(\overline{b})),\ldots,\s^{(k-h)}(\w(\overline{b}))\}\in\s^{k-h+1}(C)$.
\end{enumerate}

Clearly, properties $(\dagger)_n$ with $h=k$ guarantee that the
defined sequence \newline $(w_n(v))_{n\in\N}$
has the desired properties.

Let us start with the base case $n=1$.
Since all instances $\w(b)$ for $b\in W$ are $u$-equivalent to each other,
and since by idempotency $\w(b)\ueq\w(b)\star\s(\w(b))$,
we obtain the following $u$-equivalence for each $b_1\in B_1$:
$$(\w(b_1),\s(\w(\overline{b})),\ldots,\s^{k-1}(\w(\overline{b})))\ueq
(\w(b_1)\star\s(\w(\overline{b})),\s^2(\w(\overline{b})),\ldots,\s^k(\w(\overline{b}))),$$
and hence
$$\{\w(b_1),\s(\w(\overline{b})),\ldots,\s^{k-1}(\w(\overline{b}))\}\ueq
\{\w(b_1)\star\s(\w(\overline{b})),\s^2(\w(\overline{b})),\ldots,\s^k(\w(\overline{b}))\}.$$

So, we see that there exists a variable hyperword $x(v)\in{}^*W_{\{v\}}$, namely $\w(v)$,
that satisfies the following two properties for each $b_1\in B_1$:
\begin{itemize}
\item
$\{x(b_1),\s(\w(\overline{b})),\ldots,\s^{k-1}(\w(\overline{b}))\}\in\s^k(C)$.
\item
$\{x(b_1)\star\s(\w(\overline{b})),\s^2(\w(\overline{b})),\ldots,\s^k(\w(\overline{b}))\}\in\s^{k+1}(C)$.
\end{itemize}

Since $B_1$ is finite, we can apply backward \emph{transfer} to the finitely many properties above,
and obtain the existence of a variable word $w_1(v)\in W_{\{v\}}$
such that for every $b_1\in B_1$:
\begin{itemize}
\item
$\{w_1(b_1),\w(\overline{b}),\ldots,\s^{k-2}(\w(\overline{b}))\}\in\s^{k-1}(C)$.
\item
$\{w_1(b_1)\star\w(\overline{b}),\s(\w(\overline{b})),\ldots,\s^{k-1}(\w(\overline{b}))\}\in\s^k(C)$.
\end{itemize}

It is readily seen that the two properties above say that
$(\dagger)_1$ and $(\ddagger)_1$ are satisfied by $w_1(v)$.
Indeed, observe that if $F_1<\ldots<F_h$ are nonempty subsets of $\{1\}$, then
it must be $h=1$ and $F_1=\{1\}$.

Let's now move on to the inductive step, and assume that words
$w_1(v),\ldots,w_n(v)$ of increasing lengths have been defined that
satisfy $(\dagger)_n$ and $(\ddagger)_n$.
We observe that, since $\w(\overline{b})\in{}^*W$ is idempotent
and $w_{\beta_h}\in W$, one has the $u$-equivalences
$\w(\overline{b})\ueq \w(\overline{b})\star\,\s(\w(\overline{b}))$ and
$w_{\beta_h}\star\,\w(\overline{b})\ueq w_{\beta_h}\star\,\w(\overline{b})\star\,\s(\w(\overline{b}))$,
and so also
\begin{multline*}
\{w_{F_1},\ldots,w_{F_h},
\w(\overline{b}),\s(\w(\overline{b})),\ldots,\s^{(k-h-1)}(\w(\overline{b}))\}\ueq
\\
\{w_{F_1},\ldots,w_{F_h},
\w(\overline{b})\star\s(\w(\overline{b})),\s^2(\w(\overline{b})),\ldots,\s^{(k-h)}(\w(\overline{b}))\}
\end{multline*}
and
\begin{multline*}
\{w_{F_1},\ldots,w_{F_{h-1}},
w_{F_h}\star\,\w(\overline{b}),\s(\w(\overline{b})),\ldots,\s^{(k-h)}(\w(\overline{b}))\}\ueq
\\
\{w_{F_1},\ldots,w_{F_{h-1}},
w_{F_h}\star\,\w(\overline{b})\star\s(\w(\overline{b})),\s^2(\w(\overline{b})),\ldots,\s^{(k-h+1)}(\w(\overline{b}))\}
\end{multline*}
In consequence,
we can add to the inductive hypotheses $(\dagger)_m$ and $(\ddagger)_m$
the following two properties for each $b_{n+1}\in B_{n+1}$:\footnote
{~When $h=k$, we convene that $(\dagger')_m$ means
$\{w_{\beta_1},\ldots,w_{\beta_h}\}\in C$;
and when $h=1$, that $(\ddagger')_m$ means
$\{w_{\beta_1}\star\,\w(b_{n+1})\star\s(\w(\overline{b})),\s^2(\w(\overline{b})),\ldots,\s^k(\w(\overline{b}))\}\in\s^{k+1}(C)$.}

\begin{itemize}
\item[$(\dagger')_n$]
$\{w_{F_1},\ldots,w_{F_h},
\w(b_{n+1})\star\,\s(\w(\overline{b})),\s^2(\w(\overline{b})),\ldots,\s^{(k-h)}(\w(\overline{b}))\}\in\s^{k-h+1}(C)$.
\item[$(\ddagger')_n$]
$\{w_{F_1},\ldots,w_{F_{h-1}},
w_{F_h}\star\,\w(b_{n+1})\star\,\s(\w(\overline{b})),\s^2(\w(\overline{b})),\ldots,\s^{(k-h+1)}
(\w(\overline{b}))\}$

$\in\s^{k-h+2}(C)$.
\end{itemize}

Looking at properties $(\dagger)_m$, $(\dagger')_m$,
$(\ddagger)_m$, and $(\ddagger')_m$, we see
that there exists a variable hyperword $x(v)\in{}^*W_{\{v\}}$, namely $\w(v)$,
whose length $\ell(x(v))>\ell(w_m(x))$ (in fact $\w(v)\in{}^*W_{\{v\}}\setminus W_{\{v\}}$ has infinite length)
and that satisfies the following four conditions for all $b_1,\ldots,b_{n+1}$ where $b_i\in B_i$,
and for all nonempty subsets $F_1<\ldots<F_h$ of $\{1,\ldots,n\}$ where $1\le h\le k$:
\begin{itemize}
\item
$\{w_{F_1},\ldots,w_{F_h},x(b_{n+1}),\s(\w(\overline{b})),\ldots,\s^{(k-h-1)}(\w(\overline{b}))\}\in\s^{k-h}(C)$.
\item
$\{w_{F_1},\ldots,w_{F_h},x(b_{n+1})\star\s(\w(\overline{b})),
\s^2(\w(\overline{b})),\ldots,\s^{(k-h)}(\w(\overline{b}))\}\in\s^{k-h+1}(C)$.
\item
$\{w_{F_1},\ldots,w_{F_{h-1}},
w_{F_h}\star\,x(b_{n+1}),\s(\w(\overline{b})),\ldots,
\s^{(k-h)}(\w(\overline{b}))\}\in\s^{k-h+1}(C)$.
\item
$\{w_{F_1},\ldots,w_{F_{h-1}},
w_{F_h}\star\,x(b_{n+1})\star\,\s(\w(\overline{b})),
\s^2(\w(\overline{b})),\ldots,\s^{(k-h+1)}(\w(\overline{b}))\}$

$\in\s^{k-h+2}(C)$.
\end{itemize}

Since all sets $B_i$ are finite, we can apply by backward \emph{transfer} to the finitely many properties above,
and obtain the existence of a variable word $w_{m+1}(v)\in W_{\{v\}}$
with length $\ell(w_{m+1}(v))>\ell(w_m(v))$, and
such that for all $b_1,\ldots, b_{n+1}$ where $b_i\in B_i$, and for all nonempty subsets
$F_1<\ldots<F_h$ of $\{1,\ldots,n\}$ where $1\le h\le k$:
\begin{itemize}
\item[$(i)$]
$\{w_{F_1},\ldots,w_{F_h},w_{m+1}(b_{n+1}),\w(\overline{b}),\ldots,\s^{(k-h-2)}(\w(\overline{b}))\}\in\s^{k-h-1}(C)$.
\item[$(ii)$]
$\{w_{F_1},\ldots,w_{F_h},w_{m+1}(b_{n+1})\star\w(\overline{b}),\s(\w(\overline{b})),\ldots,\s^{(k-h-1)}(\w(\overline{b}))\}\in\s^{k-h}(C)$.
\item[$(iii)$]
$\{w_{F_1},\ldots,w_{F_{h-1}},
w_{F_h}\star\,w_{m+1}(b_{n+1}),\w(\overline{b}),\ldots,\s^{(k-h-1)}(\w(\overline{b}))\}\in\s^{k-h}(C)$.
\item[$(iv)$]
$\{w_{F_1},\ldots,w_{F_{h-1}}, w_{F_h}\star\,w_{m+1}(b_{n+1})\star\w(\overline{b}),
\s(\w(\overline{b})),\ldots,\s^{(k-h)}(\w(\overline{b}))\}$

$\in\s^{k-h+1}(C)$.
\end{itemize}

Now let $G_1<\ldots<G_h$ be nonempty subsets of $\{1,\ldots,n,n+1\}$ where $1\le h\le k$.
We have to show that for all $b_1,\ldots,b_{n+1}$ where $b_i\in B_i$,
the corresponding properties $(\dagger)_{n+1}$ and $(\ddagger)_{n+1}$ hold for
the sequence of variable words $w_1(v),\ldots,w_n(v),w_{n+1}(v)$.

If $n+1\notin G_h$, then we can directly apply the inductive hypotheses
$(\dagger)_n$ and $(\ddagger)_n$,
since in this case $G_1<\ldots<G_h$ are subsets of $\{1,\ldots,n\}$.

If $G_h=\{n+1\}$, then note that $w_{G_h}=w_{n+1}(b_{n+1})$.
So, $(\dagger)_{n+1}$ and $(\ddagger)_{n+1}$ follow from $(i)$ and $(ii)$ respectively,
by considering the subsets $G_1<\ldots<G_{h-1}$ of $\{1,\ldots,n\}$.

If $n+1\in G_h$ and $G_h$ is not a singleton, then
note that $w_{G_h}=w_{G'_h}\star w_{n+1}(b_{n+1})$
where $G'_h:=G_h\setminus\{n+1\}\ne\emptyset$.
In this case $(\dagger)_{n+1}$ and $(\ddagger)_{n+1}$ follow from $(iii)$
and $(iv)$ respectively, by considering the nonempty subsets
$G_1<\ldots<G_{h-1}<G'_h$ of $\{1,\ldots,n\}$.
\end{proof}

\bigskip

\bibliographystyle{amsalpha}

\end{document}